\newtheorem{theorem}{Theorem}[section]
\newtheorem{corollary}[theorem]{Corollary}
\newtheorem{lemma}[theorem]{Lemme}
\newtheorem{proposition}[theorem]{Proposition}
\newenvironment{proof}[1][Proof]{\textbf{#1.} }{\ \rule{0.5em}{0.5em}}
\begin{document}
\title{{\bf
Criteria of convergence of a non ordinary random continued fractions on a symmetric cone }\\
 {\small (Running title: \textbf{random continued fractions})}}
\author {Abdelhamid Hassairi\footnote{
 \textit{E-mail address: Abdelhamid.Hassairi@fss.rnu.tn}}$\;$
\\{\footnotesize{\it
Sfax University, Tunisia.}}\\
    {\footnotesize{\it  }}}
\date{}
 \maketitle

$\overline{\hspace{13cm}}$\vskip0.3cm {\small {\bf Abstract}} { In
this paper, we use a notion of ratio based on a division algorithm,
to extend to a symmetric cone the definition of a continued fraction
in its more general form. We then give a criteria of convergence of
a non ordinary random continued fraction that has arisen in the
study of some probability distributions related
to the beta distribution on the cone of positive definite symmetric matrices or on any symmetric cone.}\\
\small {\it{ Keywords:}} Symmetric matrices, Jordan algebra, symmetric cone, division algorithm, continued fractions.\\
 AMS Classification : 40A15, 17C65, 60B20

$\overline{\hspace{13cm}}$\vskip1cm

\section{Introduction }

 Besides the well known theory of real continued
fractions and its applications, many multivariate versions have been
introduced to bring answers to some needs which have arisen in
different areas. The most important seem to be the ones defined on
some matrix spaces. Since there is no single way to define a matrix
inverse, there exist many ways to define continued fractions with
matrix argument. A first natural way is to assume invertibility and
use the classical matrix inverse to define a ratio of matrices. A
second way is based on the partial matrix inverse. Another way uses
the generalized inverse considered to be more efficient in matrix
continued fraction interpolation problems. For details about these
different matrix continued fractions and their applications, we
refer the reader to $[5]$ and to the references within. It is worth
pointing out that for the applications, the convergence of a matrix
continued fraction is usually needed. However even if in many cases,
a three term recurrence relation is available, because of the
non-commutativity, it is difficult to derive applicable convergence
criteria. The present work is motivated by probability problems
concerning the characterization of some distributions on the cone of
positive symmetric matrices or more generally on any symmetric cone
such as the beta-hypergeometric distribution. The results will be
given in the general setting of a Jordan algebra and its symmetric
cone, however to make the paper accessible for a reader who is not
familiar with the theory of Jordan algebra, we have emphasized on
the case of symmetric matrices. Of course in these spaces, we can't
use the multiplication by the inverse to define a ratio and we need
to use a suitable division algorithm. We mention here that, for the
proof of a characterization result concerning the Wishart
distribution on a symmetric cone, Evelyne Bernadac $[1]$ has used an
ordinary continued fraction defined with a construction process
based on the inversion without any use of multiplication or ratio.
Assuming the independence of the random variables and some
conditions on their distributions, she has given a criteria of
convergence for the ordinary continued fraction. As a continuation
of this work, we first use a notion of quotient based on a division
algorithm to define in its more general form, a continued fraction
$K(x_{n}/y_{n})$ where $(x_{n})$ and $(y_{n})$ are two sequences of
elements of the symmetric cone. The ordinary continued fraction
considered in $[1]$ corresponds to the case where $x_{n}=e$, for all
$n$, where $e$ is the identity element of the algebra. We then show
that, as in the classical matrix case (see $[4]$), any non ordinary
continued fraction is equivalent to an ordinary one. This result has
a mathematical interest, but it has no implication on the
convergence of the non ordinary random continued fraction. In fact,
when the sequences in the non ordinary fraction are constituted of
independent random variables, the corresponding ordinary continued
fraction has not this property. Accordingly, the main part of the
paper is devoted to establish a criteria of convergence for the non
ordinary random continued fractions of the form $K(x_{n}/e)$, where
$(x_{n})$ is a sequence of independent random variables valued in a
symmetric cone. This kind of non ordinary random continued fractions
with the ordinary ones arise in the characterization of many
important distributions on symmetric matrices or more generally on
any symmetric cone.

\section{Symmetric cones}

It is well known that any symmetric cone is associated to a Jordan
algebra, so that in order to present our results in their most
general form, it is necessary to review some facts concerning Jordan
algebra and their symmetric cones. Our notations are the ones used
in $[2]$ or in $[3]$. Let us recall that a Euclidean Jordan algebra
is a Euclidean space $V$ with scalar product $<x,y>$ and a bilinear
map
$$V\times V\longrightarrow V,\ (x,y)\longmapsto x.y$$
called Jordan product such that, for all $x,\ y,\ z$ in $V$,

i) $x.y=y.x$,

ii) $<x,y.z>=<x.y,z>$,

iii) there exists $e$ in $V$ such that $e.x=x$,

iv) $x.(x^2.y)=x^2.(x.y)$, where we used the abbreviation
$x^2=x.x$.\\
A Euclidean Jordan algebra is said to be simple if it does not
contain a nontrivial ideal. \\For the algebra of symmetric matrices,
the Jordan product is defined by $x.y=\frac{1}{2}(xy+yx)$ where $xy$
is the ordinary product of the matrices $x$ and $y$ and the scalar
product is defined by $<x,y>=\textrm{trace}(xy)$.

Now, to each Euclidean simple Jordan algebra $V$, we attach the set
of Jordan squares
$$\overline{\Omega}=\{x^2;\ x\in V\}.$$
Its interior is denoted $\Omega$. \\In general, $\Omega$ is a
symmetric cone, i.e., a convex cone which is

i) self dual, i.e., $\Omega=\{x \in V;\ <x,y>\
>0\ \forall y \in \overline{\Omega}\setminus \{0\} \}$.

ii) homogeneous, i.e., the group $G(\Omega)$ of linear automorphisms
of $\Omega$ acts transitively on $\Omega$.
\\Observe that when $V$
is the algebra of symmetric matrices, $\Omega$ (resp.
$\overline{\Omega}$) denotes the cone of symmetric positive definite
(resp. nonnegative) matrices. \\By analogy with the case where
$V=\Bbb{R}$ and $\Omega=]0,\infty[$, for $x$ and $y$ in the cone
$\Omega$, we write $0<x<y$ if $y-x\in\Omega$. With this, one can
talk about an increasing or decreasing sequence in $\Omega$.\\Given
a Euclidean simple Jordan algebra $V$, we denote by $G$ the
connected component of the identity in $G(\Omega)$. If $V$ is the
space of $(r,r)$-symmetric matrices and $GL(\Bbb{R}^{r})$ is the
group of invertible matrices, The elements of $G(\Omega)$ are the
maps $g:V\longrightarrow V$ such that there exists $a$ in
$GL(\Bbb{R}^{r})$ with
$$g(x)=axa^*,$$
where $a^*$ is the transpose of $a$. \\
For two automorphisms $g_{1}$ and $g_{2}$ in $G$, we write
$g_{1}g_{2}$ the composition of $g_{1}$ and $g_{2}$, and for
$g_{1},...,g_{n}$ in $G$, we write
$\displaystyle\prod_{i=1}^{n}g_{i}=g_{1}...g_{n}$.\\ Throughout, we
will also use the following notations for $(x,y,z)\in V$:

$L(x)y=xy$

$P(x)y=2x(xy)-x^2y$  (quadratic representation)

$x\Box y=L(xy)+[L(x),L(y)]=L(xy)+L(x)L(y)-L(y)L(x)$.

An element $c$ of $V$ is said to be idempotent if $c^2=c$. It is a
primitive idempotent if furthermore $c \neq 0$ and is not the sum
$t+u$ of two non-null idempotents $t$ and $u$ such that $t.u=0$.

A Jordan frame is a set $\{c_1,...,c_r\}$ of primitive idempotents
such that $\displaystyle \sum_{i=1}^rc_i=e$ and
$c_i.c_j=\delta_{ij}c_i$, for $1\leq i,j\leq r$. The size $r$ of
such a frame is a constant called the rank of $V$. For any element
$x$ of a Euclidean simple Jordan algebra, there exist a Jordan frame
$(c_i)_{1\leq i \leq r}$ and $(\lambda_1,...,\lambda_r)$ in
$\Bbb{R}^{r}$ such that $x=\displaystyle\sum_{i=1}^r\lambda_i c_i$.
The real numbers $\lambda_1,\lambda_2,...,\lambda_r$ depend only on
$x$. They are called the eigenvalues of $x$ and this decomposition
is called its spectral decomposition. The trace and the determinant
of $x$ are then respectively defined by
$\textrm{tr}(x)=\displaystyle\sum_{i=1}^r\lambda_i$ and
$\Delta(x)=\displaystyle\prod_{i=1}^r\lambda_i$.

If $c$ is a primitive idempotent of $V$, the only possible
eigenvalues of $L(c)$ are $0,\frac{1}{2}$ and $1$. The corresponding
eigenspaces are respectively denoted by $V(c,0),\ E(V,\frac{1}{2})$
and $V(c,1)$ and the decomposition
$$V=V(c,0)\oplus V(c,\frac{1}{2})\oplus V(c,1)$$
is called the Peirce decomposition of $V$ with respect to $c$.

Suppose now that $(c_i)_{1\leq i \leq r}$ is a Jordan frame in $V$
and for $1\leq i,j \leq r$, let
$$ V_{ij}=\left\{
\begin{array}{l}
V(c_i,1)=\Bbb{R} c_i\ \ \ \ \ \ \ \ \ \ \textrm{if}\ i=j  \\
V(c_i,\frac{1}{2})\cap V(c_j,\frac{1}{2})\ \ \ \ \  \textrm{if}\ ieq
j.
\end{array}
\right. $$ Then (See $[2]$ Th.IV.2.1) we have $V=\underset{i\leq
j}{\displaystyle\oplus }V_{ij}$ and the dimension of $V_{ij}$ is,
for $i\neq j$, a constant $d$ called the Jordan constant. It is
related to the dimension $n$ and the rank $r$ of $V$ by the relation
$n=r+r(r-1)\frac{d}{2}.$ When $V$ is the algebra of symmetric
matrices, $d=1$.\\ If $c$ is an idempotent and if $z$ is an element
of $V(c,\frac{1}{2})$,
$$\tau_c(z)=\exp (2z\Box c)$$
is called a Frobenius transformation. It is an element of the group
$G$.
\\ Given a Jordan frame $(c_i)_{1\leq i \leq r}$, the subgroup of $G$
$$
T=\left\{ \tau _{c_{1}}(z^{(1)})\cdots\tau
_{c_{r-1}}(z^{(r-1)})P\left(\displaystyle\sum_{i=1}^r a_ic_i\right)
,\ a_i>0,\ z^{(j)}\in \displaystyle\bigoplus_{k=j+1}^r
V_{jk}\right\}
$$
is called the triangular group corresponding to the Jordan frame
$(c_i)_{1\leq i \leq r}$.\\ It is an important result (see $[2]$,
p.113) that for an element $y$ in the symmetric cone $\Omega$ there
exists a unique element in the triangular group $T$
such that $y=t(e)$. \\
To define a notion of "quotient" in the symmetric cone $\Omega$, we
usually introduce a division algorithm which is a measurable map $g$
from $\Omega$ into $G$ such that, for all $y\in \Omega$,
$g(y)(y)=e$. With the algorithm $g$, the "quotient" of an element
$x$ by $y$ is defined as $g(y)(x)$. The most usuel division
algorithms are the one corresponding to the quadratic representation
and the one corresponding to the Cholesky decomposition. In fact,
one can define the quotient of $x$ by $y$ as $P(y^{-\frac{1}{2}})x$.
On the other hand, given that for each $y$ in $\Omega $, there
exists a unique $t$ in the triangular group $T$ such that $y=t(e)$,
we define the multiplication of $x$ by $y$ as
$$\pi(y)(x)=t(x),$$ and the map $y\mapsto t^{-1}$ from $\Omega $ into $G $
realizes a division algorithm so that the "quotient" of $x$ by $y$
is defined as
$$\pi^{-1}(y)(x)=t^{-1}(x).$$
We also set
\begin{eqnarray*}
\pi^{*}(y)(x)=t^{*}(x) \ \ and \ \ \pi^{*-1}(y)(x)=t^{*-1}(x).
\end{eqnarray*}
The definition of $\pi(y)$ and $\pi^{*}(y)$ may be extended to
$y\in-\Omega$, by setting $$\pi(-y)=-\pi(y)\ \textrm{and} \
\pi^{*}(-y)=-\pi^{*}(y).$$ Consider the set
$$Str(V)=\{g\in GL(V) \ \ ; P(g(x))=gP(x)g^{*}\}.$$
It is shown in $[2]$ page 150 that $G(\Omega)$ is a subgroup of
$Str(V)$. Hence we have in particular that for $t$ in the triangular
group $T$,
\begin{equation}
P(t(e))=tP(e)t^{*}=tt^{*}.
\end{equation} If $y=t(e)$, then
\begin{equation}\label{272}
P(y)=\pi(y)\pi^{*}(y).
\end{equation}
When $\Omega$ is the cone of positive definite symmetric matrices,
the Cholesky decomposition of an element $y$ of $\Omega$, consists
in writing  $y$ in a unique manner as $ y=tt^{*}$, where $t$ is a
lower triangular matrix with strictly positive diagonal, and we have
$$\pi(y)(x)=txt^{*}  \, \  \pi^{-1}(y)(x)=t^{-1}xt^{*-1}, \ and \ P(y)(x)=yxy,$$
where the last product is the ordinary product of matrices.\\ In
this case, (\ref{272}) is easily established.

\section{Continued fraction in a symmetric cone}
We use the division algorithm defined by the Cholesky decomposition.
Let $(x_{n})_{n\geq1}$ and $(y_{n})_{n\geq0}$ be two sequences in
$\Omega$. The continued fraction denoted $$K(x_{n}/y_{n})$$ or
$$y_{0}+\left[\frac{x_{1}}{y_{1}},\frac{x_{2}}{y_{2}},...\right]$$
 is
an expression whose the $n$th convergent
$$R_{n}=y_{0}+\left[\frac{x_{1}}{y_{1}},...,\frac{x_{n}}{y_{n}}\right]$$
is defined in the following recursive way.\\
$$\left[\frac{x_{1}}{y_{1}}\right]=\left(\pi^{*-1}(x_{1})y_{1}\right)^{-1}=\pi(x_{1})\left(y_{1}^{-1}\right)$$
\begin{eqnarray*}
\left[\frac{x_{1}}{y_{1}},...,\frac{x_{n+1}}{y_{n+1}}\right]&=&
\left(\pi^{*-1}(x_{1})\left(y_{1}+\left[\frac{x_{2}}{y_{2}},...,
\frac{x_{n+1}}{y_{n+1}}\right]\right)\right)^{-1}\\&=&\pi(x_{1})\left(y_{1}+\left[\frac{x_{2}}{y_{2}},...,
\frac{x_{n+1}}{y_{n+1}}\right]\right)^{-1}.
\end{eqnarray*} Note
that to get $R_{n+1}$ from $R_{n}$, we replace $y_{n}$ by
$y_{n}+\left(\pi^{*-1}(x_{n+1})y_{n+1}\right)^{-1}$. \\

When $x_{n}=e$, for all $n$, we say that the continued fraction
$K(e/y_{n})$ is an ordinary continued fraction. Its $k$th convergent
is given by
\begin{equation} \label{76}
R_{k}=y_{0}+\left(y_{1}+\left(y_{2}+\left(y_{3}+...+\left(y_{k}\right)^{-1}\right)^{-1}..
.\right)^{-1}\right)^{-1}.
\end{equation}
As mentioned in the introduction, the construction process of this
continued fraction uses only the inversion without any use of the
product or of the division algorithm.
\\Next, we show that any continued fraction $K(x_{n}/y_{n})$ is
equivalent to an ordinary continued fraction $K(e/a_{n})$.
\begin{proposition} The continued fraction denoted $K(x_{n}/y_{n})$
is equivalent to the continued fraction $K(e/a_{n})$ where

$$a_{0}=y_{0}\ \ \ \ \ \ \ \ \ \ \ \ \ \ \ \ \ \ \ \ \ \ \ \ \ \ \ \ \ \ \ \ \ \ \ \ \ \ \ \ \ \ \ \ \ \ \ \ \ \ \   $$
\begin{equation} \label{76}
a_{2k}=\pi(x_{1})\pi^{*-1}(x_{2})...\pi(x_{2k-1})\pi^{*-1}(x_{2k})y_{2k}
\ \ \ \ \ \
\end{equation}
\begin{equation} \label{78}
a_{2k+1}=\pi^{*-1}(x_{1})\pi(x_{2})...\pi(x_{2k})\pi^{*-1}(x_{2k+1})y_{2k+1}.
\end{equation}
\end{proposition}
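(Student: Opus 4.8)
The plan is to isolate one algebraic identity governing the interaction of the inversion $z\mapsto z^{-1}$ with the maps $\pi$ and $\pi^{*}$, and then to unroll the recursive definition of the convergents; here I take "equivalence" to mean that the two fractions share the same $n$th convergent for every $n$. The tool I would set up first is
\begin{equation}\label{invrule}
g(z)^{-1}=(g^{*})^{-1}(z^{-1}),
\end{equation}
valid for every $g$ in the structure group $Str(V)$ and every invertible $z$. In the matrix model this is transparent: if $y=tt^{*}$ then $\pi(y)(u)=tut^{*}$ and $\pi^{*-1}(y)(v)=t^{*-1}vt^{-1}$, whence $\pi(y)(u)^{-1}=(tut^{*})^{-1}=t^{*-1}u^{-1}t^{-1}=\pi^{*-1}(y)(u^{-1})$. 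In general it follows from the defining relation $P(g(x))=gP(x)g^{*}$ of $Str(V)$ together with the Jordan-algebra formula $w^{-1}=P(w)^{-1}w$ (see $[2]$): taking $w=g(z)$ gives $g(z)^{-1}=P(g(z))^{-1}g(z)=(g^{*})^{-1}P(z)^{-1}g^{-1}g(z)=(g^{*})^{-1}P(z)^{-1}z=(g^{*})^{-1}(z^{-1})$. Since $Str(V)$ is a group stable under $g\mapsto g^{*}$ and contains the triangular group $T$, each $\pi(x_{j})=t_{j}$, each $\pi^{*-1}(x_{j})=(t_{j}^{*})^{-1}$, and every composition of them lies in $Str(V)$, so (\ref{invrule}) is available throughout.

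Next I would introduce the accumulated operators $\Psi_{0}=\mathrm{id}$ and
\begin{equation*}
\Psi_{m}=\bigl(\Psi_{m-1}\,\pi(x_{m})\bigr)^{*-1}\qquad(m\geq1),
\end{equation*}
and check that $\Psi_{m}(y_{m})=a_{m}$. This is a short induction from the rules $(AB)^{*}=B^{*}A^{*}$, $(A^{-1})^{*}=(A^{*})^{-1}$ and $\pi(x)^{*}=\pi^{*}(x)$: passing from $\Psi_{m-1}$ to $\Psi_{m}$ reverses the order of the factors and exchanges $\pi(x_{j})\leftrightarrow\pi^{*-1}(x_{j})$, and iterating this reproduces exactly the alternating strings stated for $a_{2k}$ and $a_{2k+1}$ in the proposition; in particular $\Psi_{0}=\mathrm{id}$ gives $a_{1}=\pi^{*-1}(x_{1})y_{1}$ and $a_{2}=\pi(x_{1})\pi^{*-1}(x_{2})y_{2}$, which are the bases of the two parity patterns.

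The core of the argument is then a one-step identity for the tail brackets $B_{m}=\left[\frac{x_{m}}{y_{m}},\dots,\frac{x_{n}}{y_{n}}\right]$, namely
\begin{equation*}
\Psi_{m-1}(B_{m})=\bigl(a_{m}+\Psi_{m}(B_{m+1})\bigr)^{-1}\quad(m<n),\qquad \Psi_{n-1}(B_{n})=(a_{n})^{-1}.
\end{equation*}
Indeed $B_{m}=\pi(x_{m})\,(y_{m}+B_{m+1})^{-1}$ by definition, so applying $\Psi_{m-1}$, pulling the inversion outside by (\ref{invrule}), and then using linearity yields $\Psi_{m-1}(B_{m})=\bigl(\Psi_{m}(y_{m})+\Psi_{m}(B_{m+1})\bigr)^{-1}=\bigl(a_{m}+\Psi_{m}(B_{m+1})\bigr)^{-1}$; the terminal case is the same computation applied to $B_{n}=\pi(x_{n})(y_{n}^{-1})$. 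Chaining this identity from $m=1$ (where $\Psi_{0}=\mathrm{id}$) down to $m=n$ telescopes the left side into $\left[\frac{x_{1}}{y_{1}},\dots,\frac{x_{n}}{y_{n}}\right]$ and the right side into $\left(a_{1}+\left(a_{2}+\dots+(a_{n})^{-1}\right)^{-1}\right)^{-1}$; adding $y_{0}=a_{0}$ shows that the $n$th convergent of $K(x_{n}/y_{n})$ equals that of $K(e/a_{n})$ for every $n$, which is the claimed equivalence.

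The step I expect to be the main obstacle is the bookkeeping of the second paragraph: one must verify that the adjoint-inverse of the growing product $\Psi_{m-1}\pi(x_{m})$ reverses its factors and flips every $\pi\leftrightarrow\pi^{*-1}$ in precisely the alternating pattern dictated by the two parity cases, and that the operator $\Psi_{m}$ so obtained is independent of the truncation level $n$ (it is, since $a_{m}$ depends only on $x_{1},\dots,x_{m}$ and $y_{m}$). Once (\ref{invrule}) is in hand, everything else is a mechanical consequence of it together with the linearity of $\pi$ and $\pi^{*-1}$.
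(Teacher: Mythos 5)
Your proof is correct, and while it runs on the same engine as the paper's argument --- the compatibility of Jordan inversion with adjoint-inverses of triangular maps --- it is organized along a genuinely different and tighter route. The paper proceeds by forward induction on the convergents: it asserts without proof the single-factor identity $\pi^{-1}(u)v^{-1}=(\pi^{*}(u)v)^{-1}$, observes that $R_{n+1}$ is obtained from $R_{n}$ by replacing $y_{n}$ with $y_{n}+\left(\pi^{*-1}(x_{n+1})y_{n+1}\right)^{-1}$, and then pushes the accumulated string $\pi(x_{1})\pi^{*-1}(x_{2})\cdots\pi^{*-1}(x_{2k})$ through the inversion, redoing the alternating bookkeeping separately at the odd and even steps (``a similar reasoning is used for $R_{2k+2}$''); strictly speaking this applies the identity to an entire composition of maps, i.e., it tacitly uses the structure-group version that you make explicit. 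You instead prove the rule once and in full generality, $g(z)^{-1}=g^{*-1}(z^{-1})$ for $g\in Str(V)$, and your derivation from $P(g(x))=gP(x)g^{*}$ together with $z^{-1}=P(z)^{-1}z$ is sound; the operator recursion $\Psi_{m}=\left(\Psi_{m-1}\pi(x_{m})\right)^{*-1}$ then makes the alternating parity pattern in the formulas for $a_{2k}$ and $a_{2k+1}$ an automatic consequence of $(AB)^{*-1}=B^{*-1}A^{*-1}$, and the backward telescoping identity $\Psi_{m-1}(B_{m})=\left(a_{m}+\Psi_{m}(B_{m+1})\right)^{-1}$ on the tail brackets treats all truncation levels uniformly in a single pass. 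What your version buys is rigor exactly where the paper is loosest, at the modest cost of the $\Psi_{m}$ formalism; what the paper's version buys is that it follows the recursive definition of $R_{n}$ verbatim. Two points you should state explicitly, since your whole argument leans on them: that $Str(V)$ is stable under $g\mapsto g^{-1}$ and $g\mapsto g^{*}$ (standard: substituting $g^{-1}x$ in the defining relation gives the first, and $g^{*}=g^{-1}P(g(e))$ with $P(g(e))=gg^{*}\in Str(V)$ gives the second), and that each $y_{m}+B_{m+1}$ lies in $\Omega$ and is therefore invertible, so every inversion in the telescoping chain is licit --- the latter being equally implicit in the paper's own proof.
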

\begin{proof}
The proof is performed by induction. We will use the fact that
\begin{equation} \label{87}
\pi^{-1}(u)v^{-1}=(\pi^{*}(u)v)^{-1}\ \ \textrm{or \ equivalently} \
\ \pi^{*}(u)v^{-1}=(\pi^{-1}(u)v)^{-1}
\end{equation}
 Since
$$\left[\frac{x_{1}}{y_{1}}\right]=\left(\pi^{*-1}(x_{1})y_{1}\right)^{-1},$$
and
\begin{eqnarray*}
\left[\frac{x_{1}}{y_{1}},\frac{x_{2}}{y_{2}}\right]&=&\left(\pi^{*-1}(x_{1})\left(y_{1}+\left[\frac{x_{2}}{y_{2}}\right]\right)\right)^{-1}\\
&=&\left(\pi^{*-1}(x_{1})\left(y_{1}+\left(\pi^{*-1}\left(x_{2}\right)y_{2}\right)^{-1}\right)\right)^{-1}\\&=&
\left(\pi^{*-1}(x_{1})y_{1}+\pi^{*-1}(x_{1})\left(\pi^{*-1}\left(x_{2}\right)y_{2}\right)^{-1}\right)^{-1}\\&=&
\left(\pi^{*-1}(x_{1})y_{1}+\left(\pi(x_{1})\pi^{*-1}\left(x_{2}\right)y_{2}\right)^{-1}\right)^{-1}.
\end{eqnarray*}
We have that
\begin{eqnarray*}
a_{0}&=&y_{0}\\
a_{1}&=&\pi^{*-1}(x_{1})y_{1}\\
a_{2}&=&\pi(x_{1})\pi^{*-1}\left(x_{2}\right)y_{2}.
\end{eqnarray*}
Therefore (\ref{78}) is true for $k=0$ and (\ref{76}) is true for
$k=1$.\\
Suppose that
$$R_{2k}=a_{0}+\left(a_{1}+\left(a_{2}+\left(a_{3}+...+\left(a_{2k}\right)^{-1}\right)^{-1}...\right)^{-1}\right)^{-1}.$$
with the $a_{i}$, $0\leq i\leq 2k$, defined by (\ref{78}) and
(\ref{76}). In particular
$$a_{2k}=\pi(x_{1})\pi^{*-1}(x_{2})...\pi(x_{2k-1})\pi^{*-1}(x_{2k})y_{2k}.$$
To get $R_{2k+1}$ from $R_{2k}$, we replace in $a_{2k}$, $y_{2k}$ by
$y_{2k}+\left(\pi^{*-1}(x_{2k+1})y_{2k+1}\right)^{-1}$, that is we
replace $a_{2k}$ by
$$\left(a_{2k}+\pi(x_{1})\pi^{*-1}(x_{2})...\pi(x_{2k-1})\pi^{*-1}(x_{2k})\left(\pi^{*-1}(x_{2k+1})y_{2k+1}\right)^{-1}\right)^{-1}.$$
Using (\ref{87}), this can be written
$$\left(a_{2k}+\left(\pi^{*-1}(x_{1})\pi(x_{2})...\pi^{*-1}(x_{2k-1})\pi(x_{2k})\pi^{*-1}(x_{2k+1})y_{2k+1}\right)^{-1}\right)^{-1}.$$
Hence
$$R_{2k+1}=a_{0}+\left(a_{1}+\left(a_{2}+\left(a_{3}+...+\left(a_{2k}+\left(a_{2k+1}\right)^{-1}\right)^{-1}\right)^{-1}...\right)^{-1}\right)^{-1}.$$
with
$$a_{2k+1}=\pi^{*-1}(x_{1})\pi(x_{2})...\pi^{*-1}(x_{2k-1})\pi(x_{2k})\pi^{*-1}(x_{2k+1})y_{2k+1}.$$
A similar reasoning is used for $R_{2k+2}$.
\end{proof}
\section{Convergence of a non ordinary continued fraction}
In this section, we suppose that $(x_{n})_{n\geq1}$ and
$(y_{n})_{n\geq0}$ are sequences of independent random variables
defined on the same probability space and valued in the cone
$\Omega$. As we have mentioned, a criteria of convergence for the
ordinary random continued fraction $K(e/y_{n})$ has been given in
$[1]$. In what follows, we give a criteria of convergence for the
non ordinary random continued fraction $K(x_{n}/e)$. For the sake of
simplicity,
$\left[\frac{x_{1}}{e},\frac{x_{2}}{e},...,\frac{x_{k}}{e}\right]$
will be written $\left[x_{1},x_{2},...,x_{k}\right]$ which is the
notation usually used for the ordinary continued fraction given in
(\ref{76}). This should not cause any confusion, the continued
fractions are completely different, the results and their proofs are
completely different although there are some similarities. The
continued fraction $\left[x_{1},x_{2},...,x_{k}\right]$ which we
will study in what follows is then defined by
\begin{equation}\label{384}
\left[x_{1}\right]=x_{1} \ \ \textrm{and} \ \
\left[x_{1},x_{2},...,x_{k}\right]=\pi(x_{1})(e+\left[x_{2},...,x_{k}\right])^{-1}.
\end{equation}
We first prove the following result:
\begin{proposition}Let $(x_{n})_{n\geq1}$ be a
sequence in $\Omega$. Then for $k\geq1$,
$$(-1)^{k+1}\left(\left[x_{1},x_{2},...,x_{k}\right]-\left[x_{1},x_{2},...,x_{k+1}\right]\right)$$
is in $\Omega$.
\end{proposition}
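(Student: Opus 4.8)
The plan is to argue by induction on the length $k$, with the statement understood to hold for every sequence in $\Omega$ (so that at the inductive step it may be applied to a shifted sequence). Two order properties of the cone will do all the work. First, each $\pi(x_1)$ is a linear automorphism of $\Omega$, so it maps $\Omega$ onto $\Omega$ and $-\Omega$ onto $-\Omega$. Second, the Jordan inversion is order-reversing on $\Omega$: if $0<A<B$ then $A^{-1}-B^{-1}\in\Omega$. A preliminary induction on $k$ using (\ref{384}) shows that every convergent $[x_1,\ldots,x_k]$ lies in $\Omega$, so all the inverses written below are well defined and the comparisons make sense.

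For the base case $k=1$ I would compute directly from (\ref{384}). Since $[x_1]=x_1=\pi(x_1)(e)$ and $[x_1,x_2]=\pi(x_1)(e+x_2)^{-1}$,
\[
[x_1]-[x_1,x_2]=\pi(x_1)\bigl(e-(e+x_2)^{-1}\bigr).
\]
From $x_2\in\Omega$ we get $e<e+x_2$, and order-reversal of the inverse gives $e-(e+x_2)^{-1}\in\Omega$; applying the automorphism $\pi(x_1)$ keeps the element in $\Omega$, which is precisely $(-1)^{1+1}([x_1]-[x_1,x_2])\in\Omega$.

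For the inductive step, fix $k\ge2$ and set $G_j=[x_2,\ldots,x_j]$, the continued fraction of the shifted sequence $(x_{n+1})_{n\ge1}$. The recursion (\ref{384}) reads $[x_1,\ldots,x_j]=\pi(x_1)(e+G_j)^{-1}$, so
\[
[x_1,\ldots,x_k]-[x_1,\ldots,x_{k+1}]=\pi(x_1)\bigl((e+G_k)^{-1}-(e+G_{k+1})^{-1}\bigr).
\]
The induction hypothesis, applied to the shifted sequence at length $k-1$, states that $(-1)^{k}(G_k-G_{k+1})\in\Omega$. Since $(e+G_k)-(e+G_{k+1})=G_k-G_{k+1}$, order-reversal of the inverse reverses this sign, giving $(-1)^{k+1}\bigl((e+G_k)^{-1}-(e+G_{k+1})^{-1}\bigr)\in\Omega$. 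Applying the cone automorphism $\pi(x_1)$ preserves this membership, and we conclude that $(-1)^{k+1}([x_1,\ldots,x_k]-[x_1,\ldots,x_{k+1}])\in\Omega$, completing the induction.

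The only substantial point is the order-reversal lemma for the Jordan inverse; everything else is a mechanical tracking of signs, the single inversion present at each level being exactly what makes the sign alternate with $k$. I would either invoke it from $[2]$ or insert a short self-contained proof via the quadratic representation: for $0<A<B$ put $C=P(B^{-1/2})A$, observe that $P(B^{-1/2})$ is an automorphism of $\Omega$ sending $B$ to $e$, so $0<C<e$ and hence $C^{-1}-e\in\Omega$ by a spectral argument; then apply $P(B^{-1/2})$ once more and use $P(B^{-1/2})C^{-1}=A^{-1}$ together with $P(B^{-1/2})e=B^{-1}$ to obtain $A^{-1}-B^{-1}\in\Omega$.
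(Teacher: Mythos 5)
Your proof is correct and follows essentially the same route as the paper: induction on $k$ applied to the shifted sequence $(x_{n+1})_{n\geq1}$, with the sign alternation at each level coming from the order-reversing property of the Jordan inverse on $\Omega$, which the paper simply quotes as a known fact while you additionally supply the standard proof via $P(B^{-1/2})$. The only cosmetic difference is the base case, where the paper factors $e-(e+x_{2})^{-1}$ explicitly instead of invoking order-reversal.
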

\begin{proof} The proof uses induction on $k$.\\
The property is true for $k=1$, in fact
\begin{eqnarray*}
\left[x_{1}\right]-\left[x_{1},x_{2}\right]&=&x_{1}-\pi(x_{1})\left(e+x_{2}\right)^{-1}\\
&=&\pi(x_{1})\left(e-\left(e+x_{2}\right)^{-1}\right)\\
&=&\pi(x_{1})\left((e+x_{2}-e)(e+x_{2})^{-1}\right),
\end{eqnarray*}
which is in $(-1)^{2}\Omega$. Suppose that the property is true for
$k$. Then
\begin{eqnarray*}
\left[x_{1},...,x_{k+1}\right]-\left[x_{1},...,x_{x+2}\right]&=&\pi(x_{1})\left(e+\left[x_{2},...,x_{k+1}\right]\right)^{-1}
-\pi(x_{1})\left(e+\left[x_{2},...,x_{k+2}\right]\right)^{-1}\\
&=&\pi(x_{1})\left(\left(e+\left[x_{2},...,x_{k+1}\right]\right)^{-1}
-\left(e+\left[x_{2},...,x_{k+2}\right]\right)^{-1}\right)\\
\end{eqnarray*}
Using the induction hypothesis, we have that
$$\left[x_{2},...,x_{k+1}\right]-\left[x_{2},...,x_{k+2}\right]\in(-1)^{k+1}\Omega,$$
so that
$$\left(e+\left[x_{2},...,x_{k+1}\right]\right)-\left(e+\left[x_{2},...,x_{k+2}\right]\right)\in(-1)^{k+1}\Omega.$$
To finish the proof, we use the fact that for all $x$ and $y$ in
$\Omega$, $$\left(x-y\in\Omega\right) \ \ \Leftrightarrow \ \
\left(x^{-1}-y^{-1}\in -\Omega\right).$$
\end{proof}\\
Now, Denoting
\begin{equation}\label{461}
w_{k}=(-1)^{k+1}\left(\left[x_{1},x_{2},...,x_{k}\right]-\left[x_{1},x_{2},...,x_{k+1}\right]\right),
\end{equation}
it is easy to see that
\begin{equation}
\left[x_{1},x_{2},...,x_{n}\right]=x_{1}+\displaystyle\sum_{k=1}^{n-1}(-1)^{k}w_{k}.
\end{equation}
Thus the continued fraction $\left[x_{1},x_{2},...,x_{n}\right]$
converges if and only if the alternating series
$\displaystyle\sum_{k=1}^{+\infty}(-1)^{k}w_{k}$ converges. For the
convergence of an alternating series in the algebra $V$, we have the
following result which appears in $[1]$.
\begin{proposition}Let $(u_{n})_{n\geq1}$ be a
sequence in $\Omega$. If $(u_{n})$ is decreasing and converges to
$0$, then the alternating series
$\displaystyle\sum_{k=1}^{+\infty}(-1)^{k}u_{k}$ is convergent
\end{proposition}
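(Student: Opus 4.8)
The plan is to mimic the classical Leibniz alternating series test, using the cone order $<$ in place of the order on $\mathbb{R}$, and replacing the real monotone convergence theorem by its analogue on the symmetric cone. Write $S_{n}=\sum_{k=1}^{n}(-1)^{k}u_{k}$ for the partial sums, and study the subsequences of even and of odd index separately. Using the hypothesis that $(u_{n})$ is decreasing, i.e. $u_{k}-u_{k+1}\in\Omega$, a direct computation gives $S_{2m+2}-S_{2m}=-(u_{2m+1}-u_{2m+2})\in-\Omega$ and $S_{2m+1}-S_{2m-1}=u_{2m}-u_{2m+1}\in\Omega$, so that $(S_{2m})_{m}$ is decreasing and $(S_{2m-1})_{m}$ is increasing for the order of $\Omega$.

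Next I would establish that the two monotone subsequences are bounded. Since $S_{2m}-S_{2m+1}=u_{2m+1}\in\Omega$ and $S_{2m}-S_{2m-1}=u_{2m}\in\Omega$, every odd partial sum lies below the neighbouring even one; chaining these relations with the monotonicities above yields $S_{1}\leq S_{2m-1}\leq S_{2m}\leq S_{2}$ for all $m$. Thus the increasing sequence $(S_{2m-1})_{m}$ is bounded above by $S_{2}$, and the decreasing sequence $(S_{2m})_{m}$ is bounded below by $S_{1}$.

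I would then invoke monotone convergence on the cone: a sequence $(T_{m})$ in $V$ that is increasing and bounded above for the order induced by $\Omega$ is convergent. The argument is to pair with the dual cone. If $T_{m}\leq T_{m+1}\leq B$, then for every $y\in\overline{\Omega}$ the self-duality of $\Omega$ gives that $\langle T_{m},y\rangle$ is a nondecreasing real sequence bounded above by $\langle B,y\rangle$, hence convergent; since $\Omega$ is open it contains a basis of $V$, so all coordinates of $T_{m}$ in such a basis converge and $(T_{m})$ converges in norm. Applying this to $(S_{2m-1})_{m}$ and to $(-S_{2m})_{m}$ produces limits $L_{\mathrm{odd}}$ and $L_{\mathrm{even}}$. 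This cone monotone convergence statement (the point where the geometry of $\Omega$, namely self-duality and the fact that $\overline{\Omega}$ spans $V$, really enters) is the one step I expect to be the genuine obstacle; everything else is a translation of the real Leibniz argument into the partial order on $\Omega$.

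Finally, the hypothesis $u_{n}\to 0$ closes the gap between the two limits. From $S_{2m}-S_{2m-1}=u_{2m}\to 0$ we obtain $L_{\mathrm{even}}=L_{\mathrm{odd}}=:L$, and since the even-indexed and odd-indexed subsequences of $(S_{n})$ both converge to the same limit $L$, the whole sequence $(S_{n})$ converges to $L$, which is exactly the convergence of $\sum_{k=1}^{+\infty}(-1)^{k}u_{k}$.
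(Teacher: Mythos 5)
Your proof is correct, but there is nothing in the paper to measure it against: the author states this proposition without proof, importing it from Bernadac's paper ("we have the following result which appears in $[1]$"), so your write-up supplies an argument the paper only cites. What you do is the natural transfer of the classical Leibniz test to the cone order, and all the steps check out: the algebra giving $S_{2m+2}-S_{2m}\in-\Omega$, $S_{2m+1}-S_{2m-1}\in\Omega$ and the interlacing $S_{1}\leq S_{2m-1}\leq S_{2m}\leq S_{2}$ is exactly right, and you correctly isolate the one genuinely cone-theoretic ingredient, monotone convergence for the order induced by $\Omega$. Your justification of that step is sound: self-duality makes $\langle T_{m},y\rangle$ nondecreasing and bounded for every $y\in\overline{\Omega}$, and since the open cone contains a basis $y_{1},\dots,y_{n}$ of $V$, the map $x\mapsto(\langle x,y_{i}\rangle)_{i}$ is a linear isomorphism, so convergence of these pairings forces norm convergence in the finite-dimensional $V$. (One wording nuance: the numbers $\langle T_{m},y_{i}\rangle$ are the coordinates of $T_{m}$ in the basis \emph{dual} to $(y_{i})$, not in $(y_{i})$ itself; this changes nothing. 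An equivalent route, worth knowing, is that order intervals $\{x:\,T_{1}\leq x\leq B\}$ of a proper cone are compact, so a monotone sequence with a convergent subsequence converges.) The final identification $L_{\mathrm{even}}=L_{\mathrm{odd}}$ from $u_{2m}\to0$ and the interleaving argument are standard and correct, so the proposal stands as a complete, self-contained proof of the quoted statement.
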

Next, we state and prove a theorem which is crucial for the study of
the convergence of the alternating series
$\displaystyle\sum_{k=1}^{+\infty}(-1)^{k}w_{k}$.
\begin{theorem} \label{483}Let $(x_{n})_{n\geq1}$ be a
sequence in $\Omega$, and denote
\begin{equation}\label{486}
\begin{array}{c}
F_{k}(x_{1},...,x_{k+2})=\left(\left[x_{1},...,x_{k}\right]^{-1}-\left[x_{1},...,x_{k+1}\right]^{-1}\right)^{-1}\\
\ \ \ \ \ \ \ \ \ \ \ \ \ \ \ \ \ \ \ \ \ \ \ \ +
\left(\left[x_{1},...,x_{k+1}\right]^{-1}-\left[x_{1},...,x_{k+2}\right]^{-1}\right)^{-1}.
\end{array}
\end{equation}
Then we have
\begin{equation}\label{492}
F_{1}=\pi(x_{1})\pi^{*-1}(x_{2})\pi^{*-1}(x_{3})(e),
\end{equation}
\begin{equation}\label{444}
F_{2}(x_{1},x_{2},x_{3},x_{4})=-\pi(x_{1})\pi^{*-1}(x_{2})\pi^{*-1}
\left(x_{3}\right)P\left(e+\pi^{*}(x_{3})(e)\right)\pi^{*-1}(x_{4})(e),
\end{equation}
 and for all
$k\geq3$,
\begin{equation}\label{497}
\begin{array}{c}
F_{k}(x_{1},...,x_{k+2})=(-1)^{k+1}\pi(x_{1})\left(\displaystyle\prod_{i=2}^{k-1}\pi^{*-1}(x_{i})P\left(e+\left[x_{i+1},...,x_{k}\right]\right)\right)
\\
\ \ \ \ \ \ \ \ \ \ \ \ \ \ \ \ \ \ \ \ \ \ \ \
 \pi^{*-1}(x_{k})\pi^{*-1}(x_{k+1})P\left(u_{k}\right)\pi^{*-1}(x_{k+2})(e),
\end{array}
\end{equation}
where
\begin{equation}\label{459}
\begin{array}{c}
u_{k}=u_{k}(x_{1},...,x_{k+1}) =e+\pi^{*}(x_{k+1})\ \ \ \ \ \ \ \ \ \ \ \ \ \ \ \ \ \ \ \ \ \ \ \ \ \ \ \ \ \ \ \ \ \ \ \ \ \ \ \ \ \ \ \\
\left(e+\displaystyle\sum_{i=0}^{k-3}(-1)^{i+1}
\left(\displaystyle\prod_{j=0}^{i}\left(\pi^{*}(x_{k-j})
P\left(\left(e+\left[x_{k-j},...,x_{k}\right]\right)^{-1}\right)\right)\right)
\left(e+\left[x_{k-i},...,x_{k}\right]\right)\right)
\end{array}
\end{equation}
\end{theorem}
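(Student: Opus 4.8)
The formulas to be established are explicit closed-form expressions for the auxiliary quantities $F_k$, which are built from the convergents $[x_1,\dots,x_j]$ of the non ordinary continued fraction. The natural approach is induction on $k$, with the recursive definition (\ref{384}) as the only structural tool and the inversion identity (\ref{87}) (together with the Peirce/quadratic-representation relation $P(t(e))=tt^*$ from (\ref{272})) as the computational engine. The plan is to first dispatch the base cases $F_1$ and $F_2$ by direct substitution, then set up the inductive step so that applying the defining relation $[x_1,\dots,x_j]=\pi(x_1)(e+[x_2,\dots,x_j])^{-1}$ peels off the leading factor $\pi(x_1)$ from every convergent simultaneously.

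First I would compute $F_1$. Using $[x_1]=x_1$, $[x_1,x_2]=\pi(x_1)(e+x_2)^{-1}$, and $[x_1,x_2,x_3]=\pi(x_1)(e+\pi(x_2)(e+x_3)^{-1})^{-1}$, I form the two inverse-differences in (\ref{486}). The key move is that each bracket carries a common left factor $\pi(x_1)$; taking inverses turns this into $\pi^{-1}(x_1)=\pi^{*}(x_1)^{-1}\cdots$, and repeated use of (\ref{87}) in the form $\pi^{*}(u)v^{-1}=(\pi^{-1}(u)v)^{-1}$ lets me collapse the differences of inverses into a single quadratic-representation term. Carrying this out should yield $F_1=\pi(x_1)\pi^{*-1}(x_2)\pi^{*-1}(x_3)(e)$, matching (\ref{492}); the computation for $F_2$ is the same in spirit but one level deeper, and is where the factor $P(e+\pi^{*}(x_3)(e))$ first appears, produced by the relation (\ref{272}) when an inverse of a sum $e+(\cdots)$ is re-expressed through $P$.

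For the general step I would exploit the self-similar structure: since every convergent $[x_1,x_2,\dots,x_j]$ equals $\pi(x_1)(e+[x_2,\dots,x_j])^{-1}$, the whole expression $F_k(x_1,\dots,x_{k+2})$ factors as $\pi(x_1)$ times a quantity built from the shifted convergents $[x_2,\dots,x_j]$. Concretely, I expect the inverse-differences to satisfy a reduction of the shape $F_k(x_1,\dots,x_{k+2}) = -\,\pi(x_1)\,\pi^{*-1}(x_2)\,P\big(e+[x_3,\dots,x_k]\big)\,F_{k-1}(x_2,\dots,x_{k+2})$ (up to the correct placement of $P$-factors), so that iterating the recursion manufactures the product $\prod_{i=2}^{k-1}\pi^{*-1}(x_i)P(e+[x_{i+1},\dots,x_k])$ in (\ref{497}). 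The tail factor $\pi^{*-1}(x_k)\pi^{*-1}(x_{k+1})P(u_k)\pi^{*-1}(x_{k+2})(e)$ is exactly what survives once all the leading $\pi(x_1)$-type factors have been stripped, and the explicit sum defining $u_k$ in (\ref{459}) should emerge by unwinding the telescoping alternating structure already recorded in (\ref{461}) and Proposition statements above.

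The main obstacle will be the bookkeeping in the inductive step: tracking the alternating signs $(-1)^{k+1}$, the interleaving of $\pi^{*-1}$ with the quadratic representations $P(e+[\,\cdots])$, and in particular verifying that the innermost object produced by the recursion is precisely $u_k$ with the nested sum $\sum_{i=0}^{k-3}(-1)^{i+1}\prod_{j=0}^{i}(\cdots)$ of (\ref{459}), rather than some superficially different but equal expression. The identity (\ref{87}) must be applied in a consistent order at each level so that $\pi$ and $\pi^{*}$ factors pair off correctly; the relation (\ref{272}), $P(y)=\pi(y)\pi^{*}(y)$, is what converts a product $\pi^{*}(x)\cdots\pi^{*}(x)^{-1}$-type cancellation into the clean $P(\cdot)$ factors, and the hard part is confirming that its repeated application leaves no residual terms. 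I expect no genuinely new idea beyond these three identities, but the index management separating the cases $k=1$, $k=2$, and $k\ge 3$ is where care is essential.
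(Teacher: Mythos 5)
Your base case $F_1$ is handled exactly as in the paper, but the inductive mechanism you propose contains a genuine gap: the reduction $F_k(x_1,\dots,x_{k+2})=-\pi(x_1)\pi^{*-1}(x_2)P\bigl(e+[x_3,\dots,x_k]\bigr)F_{k-1}(x_2,\dots,x_{k+2})$ is false, and not merely ``up to the correct placement of $P$-factors''. Test it already at $k=3$ in the scalar case $V=\Bbb{R}$, $\Omega=(0,\infty)$, where $\pi(y),\pi^{*}(y),P(y)$ are multiplication by $y,y,y^{2}$: a direct computation (or the theorem's own formulas) gives $F_3(x_1,\dots,x_5)=x_1(1+x_3+x_4)^{2}/(x_2x_3x_4x_5)$ and $F_2(x_2,\dots,x_5)=-x_2(1+x_4)^{2}/(x_3x_4x_5)$, so your right-hand side equals $x_1(1+x_3)^{2}(1+x_4)^{2}/(x_3x_4x_5)$, wrong both in the power of $x_2$ and in the squared factor, since $(1+x_3)^{2}(1+x_4)^{2}\neq(1+x_3+x_4)^{2}$. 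The structural reason is visible in the statement itself: comparing (\ref{497}) for $F_k(x_1,\dots,x_{k+2})$ with the same formula applied to the shifted sequence $F_{k-1}(x_2,\dots,x_{k+2})$, the outer products $\prod\pi^{*-1}(x_i)P(e+[x_{i+1},\dots,x_k])$ do agree after stripping the $i=2$ factor, but the inner terms $u_k(x_1,\dots,x_{k+1})$ and $u_{k-1}(x_2,\dots,x_{k+1})$ differ by the extra summand $i=k-3$ of (\ref{459}), and the leading $\pi(x_2)$ of $F_{k-1}$ is not cancelled; these discrepancies sit in the \emph{middle} of a noncommuting operator product, so no left multiplier can convert $F_{k-1}$ of the shifted sequence into $F_k$. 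Equivalently: stripping the front via $[x_1,\dots,x_j]^{-1}=\pi^{*-1}(x_1)(e+[x_2,\dots,x_j])$ produces differences of the shifted convergents \emph{themselves}, not of their inverses, and re-inverting through $(a^{-1}-b^{-1})^{-1}=a+P(a)\bigl((b-a)^{-1}\bigr)$ generates additive correction terms which accumulate into the nested alternating sum defining $u_k$ --- exactly the part your plan defers as ``bookkeeping''.

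The paper's induction runs in the opposite direction: it contracts the \emph{tail}, not the head. Setting $X_k=\pi(x_k)\bigl((e+x_{k+1})^{-1}\bigr)$, $X_{k+1}=-\pi^{*}\bigl((e+x_{k+1})^{-1}\bigr)\pi(x_{k+1})\bigl(x_{k+2}(e+x_{k+2})^{-1}\bigr)$ and $X_{k+2}=\pi^{-1}\bigl(e+\pi^{*}(x_{k+2})(e)\bigr)(x_{k+3})$, one has $[x_{i+1},\dots,x_{k-1},X_k]=[x_{i+1},\dots,x_{k+1}]$ and so on, whence the \emph{exact} identity $F_{k+1}(x_1,\dots,x_{k+3})=F_k(x_1,\dots,x_{k-1},X_k,X_{k+1},X_{k+2})$, with the front variables untouched (so the product over $i$ transforms trivially) and the entire difficulty concentrated in one verification: that $\pi\bigl(e+\pi^{*}(x_{k+2})(e)\bigr)P\bigl(u_k(x_1,\dots,X_k,X_{k+1})\bigr)\pi^{*}\bigl(e+\pi^{*}(x_{k+2})(e)\bigr)=P\bigl(u_{k+1}(x_1,\dots,x_{k+2})\bigr)$, carried out with $\pi(y)P(x)\pi^{*}(y)=P(\pi(y)x)$ and $\pi^{*}(x^{-1})(x)=e$. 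The same substitution also yields $F_2$ as $F_1(X_1,X_2,X_3)$, avoiding the ``one level deeper'' direct computation you sketch. To repair your proof you would either have to adopt this tail contraction, or else track the additive corrections of your front-stripping explicitly through the sum (\ref{459}) --- a genuinely different and substantially harder computation than the multiplicative recursion you posited.
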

\begin{proof}
\begin{eqnarray*}
F_{1}&=&F_{1}(x_{1},x_{2},x_{3})\\
&=&\left(\left[x_{1}\right]^{-1}-\left[x_{1},x_{2}\right]^{-1}\right)^{-1}
+
\left(\left[x_{1},x_{2}\right]^{-1}-\left[x_{1},x_{2},x_{3}\right]^{-1}\right)^{-1}\\
&=&
\left(\pi^{*-1}(x_{1})(e)-\pi^{*-1}(x_{1})(e+x_{2})\right)^{-1}\\&+&
\left(\pi^{*-1}(x_{1})(e+x_{2})-\pi^{*-1}(x_{1})\left(e+\left(\pi^{*-1}(x_{2})(e+x_{3})\right)^{-1}\right)\right)^{-1}
\\
&=& \left(-\pi^{*-1}(x_{1})(x_{2})\right)^{-1}+
\left(\pi^{*-1}(x_{1})(x_{2})-\pi^{*-1}(x_{1})\pi(x_{2})\left((e+x_{3})^{-1}\right)\right)^{-1}\\
&=& -\pi(x_{1})\pi^{*-1}(x_{2})(e)+
\pi(x_{1})\pi^{*-1}(x_{2})\left(\left(e-(e+x_{3})^{-1}\right)^{-1}\right)\\
&=&\pi(x_{1})\pi^{*-1}(x_{2})\left(\left(e-(e+x_{3})^{-1}\right)^{-1}-e\right)\\
&=&\pi(x_{1})\pi^{*-1}(x_{2})\pi^{*-1}(x_{3})(e)
\end{eqnarray*}
Now to calculate
\begin{eqnarray*}
F_{2}=&=&F_{2}(x_{1},x_{2},x_{3},x_{4})\\
&=&\left(\left[x_{1},x_{2}\right]^{-1}-\left[x_{1},x_{2},x_{3}\right]^{-1}\right)^{-1}
+
\left(\left[x_{1},x_{2},x_{3}\right]^{-1}-\left[x_{1},x_{2},x_{3},x_{3}\right]^{-1}\right)^{-1},
\end{eqnarray*}
we write $$\left[x_{1},x_{2}\right]=\left[X_{1}\right]$$ with
$$X_{1}=\pi(x_{1})\left(e+x_{2}\right)^{-1},$$
and we write
$$\left[x_{1},x_{2},x_{3}\right]=\left[X_{1},X_{2}\right]$$ with
$$X_{2}=-\pi^{*}\left(\left(e+x_{2}\right)^{-1}\right)\pi(x_{2})\left(x_{3}\left(e+x_{3}\right)^{-1}\right),$$
finally, we write
 $$\left[x_{1},x_{2},x_{3},x_{4}\right]=\left[X_{1},X_{2},X_{3}\right],$$
 with
$$X_{3}=\pi^{-1}\left(e+\pi^{*}(x_{3})(e)\right)(x_{4}).$$
Thus using (\ref{492}), we have
\begin{eqnarray*}
F_{2}(x_{1},x_{2},x_{3},x_{4}) &=&
F_{1}(X_{1},X_{2},X_{3})\\&=&\pi(X_{1})\pi^{*-1}(X_{2})\pi^{*-1}(X_{3})(e).
\end{eqnarray*}
Replacing, we get
\begin{eqnarray*}
F_{2}(x_{1},x_{2},x_{3},x_{4})&=&-\pi(x_{1})\pi\left((e+x_{2})^{-1}\right)
\pi^{-1}\left(\left(e+x_{2}\right)^{-1}\right)\pi^{*-1}(x_{2})\\&&\pi^{*-1}
\left(x_{3}\right)\pi\left(e+\pi^{*}(x_{3})(e)\right)
\pi^{*}\left(e+\pi^{*}(x_{3})(e)\right)\pi^{*-1}(x_{4})(e).
\end{eqnarray*}
Given that
$$\pi\left(e+\pi^{*}(x_{3})(e)\right)
\pi^{*}\left(e+\pi^{*}(x_{3})(e)\right)=P\left(e+\pi^{*}(x_{3})(e)\right),$$
we obtain
$$F_{2}(x_{1},x_{2},x_{3},x_{4})=-\pi(x_{1})\pi^{*-1}(x_{2})\pi^{*-1}
\left(x_{3}\right)P\left(e+\pi^{*}(x_{3})(e)\right)\pi^{*-1}(x_{4})(e).$$
Therefore (\ref{497}) is true for $k=2$. Suppose that it true for
$k$. Then with the same reasoning, we have that
$$F_{k+1}=F_{k+1}(x_{1},...,x_{k-1},x_{k},x_{k+1},x_{k+2},x_{k+3})=F_{k}(x_{1},...,x_{k-1},X_{k},X_{k+1},X_{k+2}),$$
with
$$X_{k}=\pi(x_{k})\left(\left(e+x_{k+1}\right)^{-1}\right),$$
$$X_{k+1}=-\pi^{*}\left(\left(e+x_{k+1}\right)^{-1}\right)\pi(x_{k+1})\left(x_{k+2}\left(e+x_{k+2}\right)^{-1}\right),$$
and
$$X_{k+2}=\pi^{-1}\left(e+\pi^{*}(x_{k+2})(e)\right)(x_{k+3}).$$
Inserting this in $F_{k}(x_{1},...,x_{k-1},X_{k},X_{k+1},X_{k+2})$
as defined in (\ref{497}), we have in particular that
$\left[x_{i+1},...,x_{k-1},X_{k}\right]$ is nothing but
$\left[x_{i+1},...,x_{k},x_{k+1}\right]$. We also have
\begin{eqnarray*}
\pi^{*-1}(X_{k})\pi^{*-1}(X_{k+1})&=&-\pi^{*-1}(x_{k})\pi^{*-1}\left(\left(e+x_{k+1}\right)^{-1}\right)
\pi^{-1}\left(\left(e+x_{k+1}\right)^{-1}\right)\\
&&\pi^{*-1}(x_{k+1})\pi^{*-1}\left(x_{k+2}\right)\pi\left(e+\pi^{*}(x_{k+2})(e)\right)\\
&=&-\pi^{*-1}(x_{k})P\left(e+x_{k+1}\right)
\pi^{*-1}(x_{k+1})\pi^{*-1}\left(x_{k+2}\right)\\
&&\pi\left(e+\pi^{*}(x_{k+2})(e)\right),
\end{eqnarray*}
and that
$$\pi^{*-1}(X_{k+2})(e)=\pi^{*}\left(e+\pi^{*}(x_{k+2})(e)\right)\pi^{*-1}(x_{k+3})(e)$$
Thus
\begin{eqnarray*}
F_{k+1}&=&F_{k+1}(x_{1},...,x_{k-1},x_{k},x_{k+1},x_{k+2},x_{k+3})\\
&=&F_{k}(x_{1},...,x_{k-1},X_{k},X_{k+1},X_{k+2})\\
&=&(-1)^{k+1}\pi(x_{1})\left(\displaystyle\prod_{i=2}^{k}\pi^{*-1}(x_{i})P\left(e+\left[x_{i+1},...,x_{k+1}\right]\right)\right)
\\&&
\pi^{*-1}(x_{k+1})\pi^{*-1}\left(x_{k+2}\right)\\
&&\pi\left(e+\pi^{*}(x_{k+2})(e)\right)
P\left(u_{k}(x_{1},...,x_{k-1},X_{k},X_{k+1})\right)\\&&\pi^{*}
\left(e+\pi^{*}(x_{k+2})(e)\right)\pi^{*-1}(x_{k+3})(e).
\end{eqnarray*}
Denoting
$$g=\pi\left(e+\pi^{*}(x_{k+2})(e)\right)
P\left(u_{k}(x_{1},...,x_{k-1},X_{k},X_{k+1})\right)\pi^{*}
\left(e+\pi^{*}(x_{k+2})(e)\right),$$ It remains to verify that
\begin{eqnarray*}
g=P\left(u_{k+1}(x_{1},...,x_{k-1},x_{k},x_{k+1},x_{k+2})\right).
\end{eqnarray*}
In fact,
\begin{eqnarray*}
&& u_{k}(x_{1},...,x_{k-1},X_{k},X_{k+1})=\\
&&e-
\pi^{-1}\left(e+\pi^{*}(x_{k+2})(e)\right)\pi^{*}\left(x_{k+2}\right)\pi^{*}(x_{k+1})
\pi\left(\left(e+x_{k+1}\right)^{-1}\right)\\&& \left(e-\pi^{*}
\left(\left(e+x_{k+1}\right)^{-1}\right)\pi^{*}(x_{k})P\left(\left(e+\left[x_{k},x_{k+1}\right]\right)^{-1}\right)
\left(e+\left[x_{k},x_{k+1}\right]\right)\right.+\\&&
\displaystyle\sum_{i=1}^{k-3}(-1)^{i+1}
\pi^{*}\left(\left(e+x_{k+1}\right)^{-1}\right)\pi^{*}\left(x_{k}\right)
P\left(\left(e+\left[x_{k},x_{k+1}\right]\right)^{-1}\right)
\\&&\left.\left(\displaystyle\prod_{j=1}^{i}\pi^{*}(x_{k-j})
P\left(\left(e+\left[x_{k-j},...,x_{k+1}\right]\right)^{-1}\right)\right)
\left(e+\left[x_{k-i},...,x_{k+1}\right]\right)\right).
\end{eqnarray*}
Using the fact that
$$\pi(y)P(x)\pi^{*}(y)=P(\pi(y)x), \ \ and \ \ \pi^{*}(x^{-1})(x)=e$$
we obtain that
\begin{eqnarray*}
g&=&P \left(e+\pi^{*}(x_{k+2})(e)-
\pi^{*}\left(x_{k+2}\right)\pi^{*}(x_{k+1})
\pi\left(\left(e+x_{k+1}\right)^{-1}\right)\right.\\&&
\left(e-\pi^{*}
\left(\left(e+x_{k+1}\right)^{-1}\right)\pi^{*}(x_{k})P\left(\left(e+\left[x_{k},x_{k+1}\right]\right)^{-1}\right)
\left(e+\left[x_{k},x_{k+1}\right]\right)\right.+\\&&
\displaystyle\sum_{i=1}^{k-3}(-1)^{i+1}
\pi^{*}\left(\left(e+x_{k+1}\right)^{-1}\right)\pi^{*}\left(x_{k}\right)
P\left(\left(e+\left[x_{k},x_{k+1}\right]\right)^{-1}\right)
\\&&\left.\left(\displaystyle\prod_{j=1}^{i}\pi^{*}(x_{k-j})
P\left(\left(e+\left[x_{k-j},...,x_{k+1}\right]\right)^{-1}\right)\right)\left.
\left(e+\left[x_{k-i},...,x_{k+1}\right]\right)\right)\right)\\
&=&P \left(e+\pi^{*}(x_{k+2})(e)-
\pi^{*}\left(x_{k+2}\right)\pi^{*}(x_{k+1})P\left(\left(e+x_{k+1}\right)^{-1}\right)
\left(\left(e+x_{k+1}\right)\right)\right.\\&&
+\pi^{*}\left(x_{k+2}\right)\pi^{*}(x_{k+1})P\left(\left(e+x_{k+1}\right)^{-1}\right)
\pi^{*}(x_{k})P\left(\left(e+\left[x_{k},x_{k+1}\right]\right)^{-1}\right)
\left(e+\left[x_{k},x_{k+1}\right]\right)+\\&&
\displaystyle\sum_{i=1}^{k-3}(-1)^{i+2}
\pi^{*}\left(x_{k+2}\right)\pi^{*}(x_{k+1})P\left(\left(e+x_{k+1}\right)^{-1}\right)\pi^{*}\left(x_{k}\right)
P\left(\left(e+\left[x_{k},x_{k+1}\right]\right)^{-1}\right)
\\&&\left.\left(\displaystyle\prod_{j=1}^{i}\pi^{*}(x_{k-j})
P\left(\left(e+\left[x_{k-j},...,x_{k+1}\right]\right)^{-1}\right)\right)\left.
\left(e+\left[x_{k-i},...,x_{k+1}\right]\right)\right)\right).
\end{eqnarray*}
Setting $i'=i+1$ and $j'=j+1$, with some standard calculation, this
is noting but
$P\left(u_{k+1}(x_{1},...,x_{k-1},x_{k},x_{k+1},x_{k+2})\right)$.
\end{proof}
We now coin a useful lemma.
\begin{lemma}\label{613} For all
$x_{1},...,x_{k}$ in $\Omega$, the vector
$$H=e+\displaystyle\sum_{i=0}^{k-3}(-1)^{i+1}
\left(\displaystyle\prod_{j=0}^{i}\left(\pi^{*}(x_{k-j})
P\left(\left(e+\left[x_{k-j},...,x_{k}\right]\right)^{-1}\right)\right)\right)
\left(e+\left[x_{k-i},...,x_{k}\right]\right)$$ is in $\Omega$.
\end{lemma}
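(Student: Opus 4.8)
The plan is to prove the lemma by induction on $k$, the point being that the alternating signs $(-1)^{i+1}$ force the sum to collapse onto a manifestly positive, nested expression in which the continued fractions disappear entirely. For $k\le 2$ the sum is empty and $H=e\in\Omega$. For $k=3$ the single term ($i=0$) is $-\pi^{*}(x_{3})P\bigl((e+x_{3})^{-1}\bigr)(e+x_{3})$, and since $P\bigl((e+x_{3})^{-1}\bigr)(e+x_{3})=(e+x_{3})^{-1}$ (because $P(y^{-1})=P(y)^{-1}$ gives $P(y^{-1})(y)=y^{-1}$), this reduces to $H=e-\pi^{*}(x_{3})\bigl((e+x_{3})^{-1}\bigr)$, which one checks equals $\bigl(e+\pi^{*}(x_{3})(e)\bigr)^{-1}$. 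As $\pi^{*}(x_{3})\in G(\Omega)$ maps $\Omega$ into $\Omega$ and $\Omega$ is stable under adding $e$ and under inversion, this lies in $\Omega$.

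The heart of the argument is the claim that, writing $H^{(k)}$ for the vector $H$ attached to $x_{3},\dots,x_{k}$, one has the recursion
\[
H^{(k)}=\bigl(e+\pi^{*}(x_{k})(H^{(k-1)})\bigr)^{-1},
\]
so that, unfolding, $H^{(k)}$ is the purely $\pi^{*}$-and-inversion expression $\bigl(e+\pi^{*}(x_{k})(e+\pi^{*}(x_{k-1})(\cdots(e+\pi^{*}(x_{3})(e))^{-1}\cdots)^{-1})^{-1}\bigr)^{-1}$, from which every continued fraction $[x_{k-j},\dots,x_{k}]$ has vanished. Granting the recursion, the lemma is immediate: $\pi^{*}(x_{k})$ preserves $\Omega$ and $\Omega$ is closed under $y\mapsto e+y$ and $y\mapsto y^{-1}$, so $H^{(k-1)}\in\Omega$ forces $H^{(k)}\in\Omega$. (As a byproduct this also identifies the quantity $u_{k}$ of Theorem \ref{483}, since $u_{k}=e+\pi^{*}(x_{k+1})(H^{(k)})=(H^{(k+1)})^{-1}\in\Omega$, which is exactly what is needed to make $P(u_{k})$ a genuine positive operator there.)

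To establish the recursion I would note that the $j=0$ factor $\pi^{*}(x_{k})P\bigl((e+x_{k})^{-1}\bigr)$ is common to every summand, and then verify that, after lowering the top index, the remaining alternating sum reassembles into $\pi^{*}(x_{k})(H^{(k-1)})$. The tools are precisely those driving the inductive step of Theorem \ref{483}: the relation $\pi(y)P(x)\pi^{*}(y)=P(\pi(y)(x))$, the identities $\pi^{*}(x^{-1})(x)=e$ and $P(y^{-1})=P(y)^{-1}$, and $P(y)=\pi(y)\pi^{*}(y)$ from (\ref{272}). The mechanism is most transparent in the matrix model, where $\pi(y)v=tvt^{*}$, $\pi^{*}(y)v=t^{*}vt$, $P(y)v=yvy$ with $y=tt^{*}$: the Cholesky push-through $(e+t\,C\,t^{*})^{-1}t=t\,(e+C\,t^{*}t)^{-1}$ and the collapse $t^{*}(D+tt^{*})^{-1}t=E(e+E)^{-1}=e-(e+E)^{-1}$ (with $E=t^{*}D^{-1}t$) are exactly what turn each telescoped block into $(e+\text{positive})^{-1}$; checking $k=3,4$ this way both confirms the recursion and reveals the pattern, and the general symmetric-cone case is then the same computation written with the abstract identities above.

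The main obstacle is this collapse itself: keeping track of the cancellations among the $(-1)^{i+1}$ terms and showing that the products of $\pi^{*}$ and $P$ factors, once the common outer factor is stripped and the index lowered, telescope so that the continued-fraction arguments fold back into the $\pi^{*}$-only nested form of $H^{(k-1)}$. This is the same bookkeeping that makes the inductive step of Theorem \ref{483} delicate, and I expect it to be the only real difficulty here; once the recursion is in hand, the positivity conclusion is a one-line consequence of the stability of $\Omega$ under $\pi^{*}(x_{k})$, addition of $e$, and inversion.
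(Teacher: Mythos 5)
Your central recursion $H^{(k)}=\bigl(e+\pi^{*}(x_{k})(H^{(k-1)})\bigr)^{-1}$ does appear to be true --- it checks out in the scalar case and in the matrix model through $k=5$, each collapse being an instance of the identity $\pi^{*}(u)(v^{-1})=\bigl(\pi^{-1}(u)(v)\bigr)^{-1}$ from (\ref{87}) together with the one-variable identity $(e+N^{-1})^{-1}=e-(e+N)^{-1}$ --- and it is strictly stronger than the lemma, since it yields $0<H<e$ and evaluates $u_{k}=(H^{(k+1)})^{-1}$ exactly. But your proposal never proves it, and that is a genuine gap rather than a routine verification. You check $k\leq 4$ and then sketch a plan whose first move --- factor out the common $j=0$ factor $\pi^{*}(x_{k})P\bigl((e+x_{k})^{-1}\bigr)$ and ``lower the top index'' --- cannot work as stated: $x_{k}$ sits at the tail of every bracket $[x_{k-j},\dots,x_{k}]$, at every depth, so stripping the outer factor leaves a sum still saturated with $x_{k}$. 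What actually makes the sum collapse (and what your own $k=4,5$ verifications secretly do) is a telescoping from the \emph{innermost} summand ($i=k-3$, the only one containing $x_{3}$) outward: the two deepest remaining terms merge, via the two identities above, into $b-\bigl(\pi^{-1}(x_{3})(e)+b^{-1}\bigr)^{-1}$ with $b=e+[x_{4},\dots,x_{k}]$, and the merged term is no longer of the original shape --- the next bracket acquires an additive positive perturbation --- so a naive induction on $k$ does not close and the inductive hypothesis must be strengthened to allow such perturbations. Until that bookkeeping is written out, the heart of your argument is a conjecture verified in low degree, as you yourself concede.

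It is worth seeing how much lighter the paper's own proof is, precisely because it never computes $H$. Writing $A_{i}$ for the $i$th summand, the paper groups $H=(e-A_{0})+\sum_{i\,\mathrm{odd},\,1\leq i\leq k-3}(A_{i}-A_{i+1})$ (with the convention $A_{k-2}=0$) and shows each group lies in $\Omega$: first $e-A_{0}=e-\bigl(e+\pi^{-1}(x_{k})(e)\bigr)^{-1}\in\Omega$, and for odd $i$ the pair $A_{i}-A_{i+1}$ is the image, under a composition of $\pi^{*}$'s and $P$'s of elements of $\Omega$ (hence an element of $G$ preserving $\Omega$), of $b-\bigl(\pi^{-1}(x_{k-(i+1)})(e)+b^{-1}\bigr)^{-1}\in\Omega$, where $b=e+\left[x_{k-i},\dots,x_{k}\right]$ --- the very merge identity above, applied locally once per pair instead of telescoped globally through all levels. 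So the pairwise grouping buys the lemma in a few lines with no closed form at all, while your route, if completed, buys more: the exact value of $H$, the sharper bound $H<e$, and a structural identification of the quantity $u_{k}$ of Theorem \ref{483}. To finish along your lines, carry out the inside-out induction with the strengthened hypothesis indicated; otherwise, the paper's local pairing is the short path to the stated positivity.
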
Denoting for $1\leq i\leq k-3$,
$$A_{i}=\left(\displaystyle\prod_{j=0}^{i}\left(\pi^{*}(x_{k-j})
P\left(\left(e+\left[x_{k-j},...,x_{k}\right]\right)^{-1}\right)\right)\right)
\left(e+\left[x_{k-i},...,x_{k}\right]\right),$$ we have that
$$H=e-\pi^{*}(x_{k}) P\left(\left(e+x_{k}\right)^{-1}\right)
\left(e+x_{k}\right)+ \displaystyle\sum_{1\leq i\leq k-3, \ i \
odd}(A_{i}-A_{i+1}),$$ with $A_{k-2}=0$. Now
\begin{eqnarray*}
e-\pi^{*}(x_{k}) P\left(\left(e+x_{k}\right)^{-1}\right)
\left(e+x_{k}\right)&=&e-\pi^{*}(x_{k})
\left(\left(e+x_{k}\right)^{-1}\right)\\&=&e -\left(\pi^{-1}(x_{k})
\left(e+x_{k}\right)\right)^{-1}\\&=&e -\left(\pi^{-1}(x_{k})
(e)+e\right)^{-1}\in\Omega.
\end{eqnarray*}
On the other hand,  we have for $i$ odd,
\begin{eqnarray*}A_{i}-A_{i+1}&=&\left(\displaystyle\prod_{j=0}^{i}\left(\pi^{*}(x_{k-j})
P\left(\left(e+\left[x_{k-j},...,x_{k}\right]\right)^{-1}\right)\right)\right)\\
&&\left(\left(e+\left[x_{k-i},...,x_{k}\right]\right)-\pi^{*}(x_{k-(i+1)})
\left(\left(e+\left[x_{k-(i+1)},...,x_{k}\right]\right)^{-1}\right)
\right)\\
&=&\left(\displaystyle\prod_{j=0}^{i}\left(\pi^{*}(x_{k-j})
P\left(\left(e+\left[x_{k-j},...,x_{k}\right]\right)^{-1}\right)\right)\right)\\
&&\left(\left(e+\left[x_{k-i},...,x_{k}\right]\right)-\pi^{*}(x_{k-(i+1)})
\left(\left(e+\pi(x_{k-(i+1)})\left(\left(e+\left[x_{k-i},...,x_{k}\right]\right)^{-1}\right)\right)^{-1}\right)
\right)\\
&=&\left(\displaystyle\prod_{j=0}^{i}\left(\pi^{*}(x_{k-j})
P\left(\left(e+\left[x_{k-j},...,x_{k}\right]\right)^{-1}\right)\right)\right)\\
&&\left(\left(e+\left[x_{k-i},...,x_{k}\right]\right)-
\left(\left(\pi^{-1}(x_{k-(i+1)})(e)+\left(e+\left[x_{k-i},...,x_{k}\right]\right)^{-1}\right)^{-1}\right)
\right),
\end{eqnarray*}
which is clearly in $\Omega$. Thus $H$ is in $\Omega$. !
\begin{theorem} The sequence $(w_{k})$ defined in (\ref{461}) is decreasing.
\end{theorem}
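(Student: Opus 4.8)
The plan is to reduce the monotonicity of $(w_k)$ to the sign information already encoded in Theorem \ref{483}, exploiting that inversion reverses the order on $\Omega$. Write $R_j=[x_1,\dots,x_j]$; the recursive definition (\ref{384}) shows that every $R_j$ lies in $\Omega$. Starting from (\ref{461}) and using $-(-1)^{k+2}=(-1)^{k+1}$, a one-line computation gives the telescoping identity
$$w_k-w_{k+1}=(-1)^{k+1}(R_k-R_{k+1})+(-1)^{k+1}(R_{k+1}-R_{k+2})=(-1)^{k+1}(R_k-R_{k+2}),$$
so the theorem is equivalent to showing that $(-1)^{k+1}(R_k-R_{k+2})\in\Omega$ for every $k\geq1$.

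First I would record that $F_k\in(-1)^{k+1}\Omega$ for all $k\geq1$. In each of (\ref{492}), (\ref{444}) and (\ref{497}), $F_k$ is written as $(-1)^{k+1}$ times a composition of the maps $\pi(x_1)$, the $\pi^{*-1}(x_i)$, the quadratic representations $P(e+[x_{i+1},\dots,x_k])$ and $P(u_k)$, evaluated at $e$. The maps $\pi(x_1)$ and $\pi^{*-1}(x_i)$ belong to $G(\Omega)$ and hence preserve $\Omega$; each $e+[x_{i+1},\dots,x_k]$ lies in $\Omega$, and $u_k\in\Omega$ by Lemma \ref{613} (indeed $u_k=e+\pi^{*}(x_{k+1})(H)$ with $H\in\Omega$), so the quadratic representations also preserve $\Omega$. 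Applying this cone-preserving composition to $e\in\Omega$ yields $(-1)^{k+1}F_k\in\Omega$.

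Next I would transfer this to the two consecutive inverse differences. Put $p=R_k^{-1}-R_{k+1}^{-1}$ and $q=R_{k+1}^{-1}-R_{k+2}^{-1}$, so that $F_k=p^{-1}+q^{-1}$ and $p+q=R_k^{-1}-R_{k+2}^{-1}$. The proposition preceding (\ref{461}) gives $(-1)^{k+1}(R_k-R_{k+1})\in\Omega$ and $(-1)^{k}(R_{k+1}-R_{k+2})\in\Omega$; applying the equivalence $(x-y\in\Omega)\Leftrightarrow(x^{-1}-y^{-1}\in-\Omega)$, valid for $x,y\in\Omega$, then forces $p\in(-1)^k\Omega$ and $q\in(-1)^{k+1}\Omega$. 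Setting $|p|=(-1)^kp$ and $|q|=(-1)^{k+1}q$, both in $\Omega$, one has $F_k=(-1)^k(|p|^{-1}-|q|^{-1})$, so the bound $(-1)^{k+1}F_k\in\Omega$ reads $|q|^{-1}-|p|^{-1}\in\Omega$. One application of the order-reversal to $|q|^{-1},|p|^{-1}\in\Omega$ turns this into $|p|-|q|\in\Omega$, that is $(-1)^k(p+q)=(-1)^k(R_k^{-1}-R_{k+2}^{-1})\in\Omega$; a final use of the same equivalence on $R_k,R_{k+2}\in\Omega$ converts the sign $(-1)^k$ on the inverse difference into $(-1)^{k+1}$ on $R_k-R_{k+2}$, giving $(-1)^{k+1}(R_k-R_{k+2})=w_k-w_{k+1}\in\Omega$.

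I expect the only genuine difficulty to be bookkeeping rather than conceptual. The equivalence $(x-y\in\Omega)\Leftrightarrow(x^{-1}-y^{-1}\in-\Omega)$ is available only for pairs already in $\Omega$, so the argument must be routed through the positive elements $|p|,|q|,|p|^{-1},|q|^{-1}$ rather than through $p,q$ directly, and the parity of $k$ must be tracked consistently through each inversion. All the substantive work, in particular the positivity of $u_k$ that underlies $F_k\in(-1)^{k+1}\Omega$, has already been carried out in Theorem \ref{483} and Lemma \ref{613}.
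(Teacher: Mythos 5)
Your proof is correct, but it reaches the conclusion by a different reduction than the paper's. The paper first prepends the identity element: using $\left[e,x_{1},\dots,x_{k}\right]^{-1}=e+\left[x_{1},\dots,x_{k}\right]$ it rewrites $w_{k}^{-1}$ as $(-1)^{k+1}\left(\left[e,x_{1},\dots,x_{k}\right]^{-1}-\left[e,x_{1},\dots,x_{k+1}\right]^{-1}\right)^{-1}$, so that $w_{k}^{-1}-w_{k+1}^{-1}$ is literally $\pm F_{k+1}(e,x_{1},\dots,x_{k+2})$; evaluating (\ref{497}) at this shifted argument list (where $\pi(e)$ is the identity) yields the closed form (\ref{608}), hence $w_{k}^{-1}-w_{k+1}^{-1}\in-\Omega$, and a single application of the order-reversal of inversion to $w_{k},w_{k+1}\in\Omega$ finishes. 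You instead telescope $w_{k}-w_{k+1}=(-1)^{k+1}(R_{k}-R_{k+2})$, apply Theorem \ref{483} with the unshifted arguments to get $F_{k}(x_{1},\dots,x_{k+2})=p^{-1}+q^{-1}\in(-1)^{k+1}\Omega$, and then convert the sign of $p^{-1}+q^{-1}$ into the sign of $p+q$ through three uses of the anti-monotonicity of inversion, with the parities $p\in(-1)^{k}\Omega$ and $q\in(-1)^{k+1}\Omega$ supplied by the alternation proposition preceding (\ref{461}). I checked the bookkeeping: the identities $F_{k}=(-1)^{k}(|p|^{-1}-|q|^{-1})$, $|p|-|q|=(-1)^{k}(p+q)$, and the final parity flip on $R_{k}^{-1}-R_{k+2}^{-1}$ are all consistent, and the positivity of $u_{k}$ via Lemma \ref{613} that you invoke to make $P(u_{k})$ cone-preserving is exactly what the closed forms require, including the low cases $k=1,2$ via (\ref{492}) and (\ref{444}). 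Both arguments therefore run on the same engine (Theorem \ref{483} plus Lemma \ref{613} plus cone preservation by $G$), but the reductions differ, and the difference matters downstream: your route is more economical for the monotonicity statement alone, avoiding the $e$-prepending trick and the re-evaluation of $F$ at a shifted list, whereas the paper's computation additionally produces the explicit operator identity $w_{k+1}^{-1}-w_{k}^{-1}=Q_{k}(x_{k+2}^{-1})$ of (\ref{688}), with $Q_{k}$ as in (\ref{624}), which is indispensable in the proof of the main convergence result, Theorem \ref{830}. Your argument yields only the sign of $w_{k}-w_{k+1}$, so the paper would still need its version of the computation later.
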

\begin{proof}
Form the definition, we have that
$$\left[e,x_{1},x_{2},...,x_{k}\right]=\pi(e)\left(\left(e+\left[x_{1},x_{2},...,x_{k}\right]\right)^{-1}\right),$$
so that
$$\left[e,x_{1},x_{2},...,x_{k}\right]^{-1}=e+\left[x_{1},x_{2},...,x_{k}\right].$$
Hence
\begin{eqnarray*}
w_{k}^{-1}&=&(-1)^{k+1}\left(\left[x_{1},x_{2},...,x_{k}\right]-\left[x_{1},x_{2},...,x_{k+1}\right]\right)^{-1}\\
&=&(-1)^{k+1}\left(\left[e,x_{1},x_{2},...,x_{k}\right]^{-1}-\left[e,x_{1},x_{2},...,x_{k+1}\right]^{-1}\right)^{-1}
\end{eqnarray*}
\begin{eqnarray*}
w_{k}^{-1}-w_{k+1}^{-1}&=&(-1)^{k+1}\left(\left[e,x_{1},x_{2},...,x_{k}\right]^{-1}-\left[e,x_{1},x_{2},...,x_{k+1}\right]^{-1}\right)^{-1}\\
&-&(-1)^{k+2}\left(\left[e,x_{1},x_{2},...,x_{k+1}\right]^{-1}-\left[e,x_{1},x_{2},...,x_{k+2}\right]^{-1}\right)^{-1}\\
&=&(-1)^{k+1}\left\{\left(\left[e,x_{1},x_{2},...,x_{k}\right]^{-1}-\left[e,x_{1},x_{2},...,x_{k+1}\right]^{-1}\right)^{-1}\right.\\
&&+\left.\left(\left[e,x_{1},x_{2},...,x_{k+1}\right]^{-1}-\left[e,x_{1},x_{2},...,x_{k+2}\right]^{-1}\right)^{-1}\right\} \\
&=&(-1)^{k+2}F_{k+1}(e,x_{1},...,x_{k+2}) \\
&=&
(-1)^{k+1}(-1)^{k+2}\left(\displaystyle\prod_{i=1}^{k-1}\pi^{*-1}(x_{i})P\left(e+\left[x_{i+1},...,x_{k}\right]\right)\right)
\\&&\pi^{*-1}(x_{k})\pi^{*-1}(x_{k+1})P\left(u_{k+1}(e,x_{1},...,x_{k+1})\right)\pi^{*-1}(x_{k+2})(e)\\
\end{eqnarray*}
Therefore
\begin{equation}\label{608}
\begin{array}{c}
w_{k}^{-1}-w_{k+1}^{-1}=
-\left(\displaystyle\prod_{i=1}^{k-1}\pi^{*-1}(x_{i})P\left(e+\left[x_{i+1},...,x_{k}\right]\right)\right)
\pi^{*-1}(x_{k})\pi^{*-1}(x_{k+1})\\P\left(u_{k+1}(e,x_{1},...,x_{k+1})\right)\pi^{*-1}(x_{k+2})(e).
\end{array}
\end{equation}
As for all $v$ in the cone $\Omega$, the automorphisms $\pi(v)$, and
$P(v)$ are in the group $G$, they conserve $\Omega$, we deduce that
$$w_{k}^{-1}-w_{k+1}^{-1}\in-\Omega,$$
which is equivalent to
$$w_{k}-w_{k+1}\in\Omega.$$
Hence the sequence $(w_{k})$ is decreasing.
\end{proof}\\
Next, we give an other important intermediary result. Using the
notation above, for a sequence $(x_{n})$ in the cone $\Omega$, we
define the following element of $G$:
\begin{equation}\label{624}
Q_{k}=
\left(\displaystyle\prod_{i=1}^{k-1}\pi^{*-1}(x_{i})P\left(e+\left[x_{i+1},...,x_{k}\right]\right)\right)
\pi^{*-1}(x_{k})\pi^{*-1}(x_{k+1})P\left(u_{k+1}(e,x_{1},...,x_{k+1})\right)
\end{equation}
Note that according to (\ref{608}), we have
\begin{equation}\label{688}
w_{k+1}^{-1}-w_{k}^{-1}= Q_{k}(x_{k+2}^{-1}).
\end{equation}
\begin{proposition}Let $y$
be an element of $\Omega$. Then for all $k\geq2$,
$$Q_{k}^{*}(y)\geq\pi^{-1}(x_{1})(y)$$
\end{proposition}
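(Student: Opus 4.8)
The plan is to pass to adjoints, flip the inequality by self--duality of $\Omega$, and thereby reduce the claim to the statement that one explicit operator expands the cone, which I would then prove by induction on $k$.

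First I would record the adjoint identities that the construction supplies. Since $\pi(v)$ and $\pi^{*}(v)$ are the triangular element (with $v=t(e)$) and its adjoint operator, one has $(\pi(v))^{*}=\pi^{*}(v)$, hence $(\pi^{-1}(v))^{*}=\pi^{*-1}(v)$ and $(\pi^{*-1}(v))^{*}=\pi^{-1}(v)$; moreover each $P(v)$ is self--adjoint, because axiom (ii) makes every $L(v)$ self--adjoint and $P(v)=2L(v)^{2}-L(v^{2})$. By self--duality of $\Omega$, a linear operator $A$ sends $\overline{\Omega}$ into $\overline{\Omega}$ if and only if $A^{*}$ does, so applying this to $A=Q_{k}^{*}-\pi^{-1}(x_{1})$ (and using continuity to move between $\Omega$ and $\overline{\Omega}$) shows that the proposition is equivalent to $Q_{k}(z)\geq\pi^{*-1}(x_{1})(z)$ for all $z$. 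Because $\pi^{*-1}(x_{1})$ is the leading factor of $Q_{k}$ in (\ref{624}) and lies in $G\subseteq G(\Omega)$, hence is an order isomorphism, writing $Q_{k}=\pi^{*-1}(x_{1})R$ reduces everything to proving $R(z)\geq z$ for all $z\in\overline{\Omega}$, where
\[ R=P\big(e+[x_{2},\dots,x_{k}]\big)\Big(\prod_{i=2}^{k-1}\pi^{*-1}(x_{i})\,P\big(e+[x_{i+1},\dots,x_{k}]\big)\Big)\pi^{*-1}(x_{k})\,\pi^{*-1}(x_{k+1})\,P\big(u_{k+1}(e,x_{1},\dots,x_{k+1})\big). \]
A useful preliminary observation is that $R$ involves only $x_{2},\dots,x_{k+1}$; in particular $u_{k+1}(e,x_{1},\dots,x_{k+1})$ does not depend on $x_{1}$, which is exactly what makes an induction on $k$ run.

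I would prove $R(z)\geq z$ by induction on $k$, modelled on the telescoping of Lemma \ref{613} and the substitution used in Theorem \ref{483}. For the base case $k=2$ the product over $i$ is empty and $R=P(e+x_{2})\pi^{*-1}(x_{2})\pi^{*-1}(x_{3})P\big(u_{3}(e,x_{1},x_{2},x_{3})\big)$; using the identities $\pi(v)P(w)\pi^{*}(v)=P(\pi(v)w)$ and (\ref{87}), together with $u_{3}(e,x_{1},x_{2},x_{3})=e+\pi^{*}(x_{3})(H)$ for some $H\in\Omega$ (Lemma \ref{613}), one simplifies $R$ and checks $R(z)\geq z$ directly. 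For the inductive step I would feed in the substitution $x_{k}\mapsto X_{k}$, $x_{k+1}\mapsto X_{k+1}$, $x_{k+2}\mapsto X_{k+2}$ already exploited in the proof of Theorem \ref{483}, rewriting the index--$k$ operator through the index--$(k-1)$ one evaluated at the shifted arguments, flanked by factors $\pi^{*-1}(\cdot)$, $\pi(\cdot)$, $P(\cdot)$ that all lie in $G$ and so preserve $\overline{\Omega}$; the additional contribution produced by $u_{k+1}$ is again nonnegative by Lemma \ref{613}, and combining these with the induction hypothesis gives $R(z)\geq z$.

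The main obstacle is precisely this last cone inequality, and it cannot be obtained factor by factor. Indeed, although $a-e\in\overline{\Omega}$ makes $P(a)-\mathrm{Id}$ a nonnegative \emph{operator} (its eigenvalues on the Peirce spaces are the products $\lambda_{i}\lambda_{j}\geq1$), it does \emph{not} make $P(a)(v)-v$ lie in $\overline{\Omega}$; the cone ordering and the operator ordering are genuinely different, so one cannot drop the blocks $P(e+[\dots])$ and $P(u_{k+1})$ one at a time. The real work is to reorganize $R(z)-z$ into a sum of manifestly cone--positive pieces, using the interlocking of each $\pi^{*-1}(x_{i})$ with its neighbouring $P(e+[x_{i+1},\dots,x_{k}])$ together with the positivity of $u_{k+1}$ guaranteed by Lemma \ref{613}. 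This is the same telescoping device by which that lemma was established ($H$ written as a leading positive term plus pairs $A_{i}-A_{i+1}\in\Omega$), and I expect the careful bookkeeping of this telescoping, rather than any single new idea, to be the crux of the argument.
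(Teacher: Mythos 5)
Your two preparatory reductions are sound, and they already depart from the paper's route: self-duality of $\overline{\Omega}$ does give that a linear map preserves $\overline{\Omega}$ if and only if its adjoint does, so the proposition is equivalent to $Q_{k}(z)\geq \pi^{*-1}(x_{1})(z)$ on the cone, and peeling the leading factor $\pi^{*-1}(x_{1})\in G(\Omega)$ off (\ref{624}) legitimately reduces everything to $R(z)\geq z$; your side remark that $u_{k+1}(e,x_{1},\dots,x_{k+1})$ involves only $x_{2},\dots,x_{k+1}$ also checks against (\ref{459}). But past this point nothing is actually proved. The base case $k=2$ is left as ``one simplifies $R$ and checks $R(z)\geq z$ directly''; the inductive step consists of naming the substitution $x_{k}\mapsto X_{k}$, $x_{k+1}\mapsto X_{k+1}$, $x_{k+2}\mapsto X_{k+2}$ from the proof of Theorem \ref{483} and asserting the conclusion; and you end by conceding that the decisive task --- rewriting $R(z)-z$ as a sum of cone-positive pieces --- remains to be done. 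Since you yourself close off the only cheap route (factorwise monotonicity), the central inequality of the proposition is left entirely unestablished: this is a genuine gap, not a presentational one.

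The comparison with the paper is nonetheless instructive, because the step you declare impossible is exactly the step the paper takes. The paper stays on the $Q_{k}^{*}$ side and, using $P(x)=\pi(x)\pi^{*}(x)$, $\pi^{*}(x)(x^{-1})=e$, the multiplicativity of the triangular representation, and Lemma \ref{613} (through $u_{k+1}(e,x_{1},\dots,x_{k+1})=e+\pi^{*}(x_{k+1})(e+v)$ with $v\in\Omega$), regroups the blocks (\ref{714})--(\ref{679}) so that $Q_{k}^{*}$ is $\pi^{-1}(x_{1})$ followed by operators of the shapes $\pi(e+z)$, $\pi^{*}(e+z)$, $P(e+z)$ with $z\in\Omega$; it then concludes factor by factor from the claimed inequalities $\pi(e+z)(y)>y$, $\pi^{*}(e+z)(y)>y$, $P(e+z)(y)>y$. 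On this point your skepticism is mathematically justified once the rank is at least $2$: with $t=\mathrm{diag}(1.1,\,10)$ and $y=\left(\begin{smallmatrix}1&0.5\\0.5&1\end{smallmatrix}\right)$ one has $tt^{*}=e+z$ with $z=\mathrm{diag}(0.21,\,99)\in\Omega$, yet $\pi(e+z)(y)-y=tyt^{*}-y=\left(\begin{smallmatrix}0.21&5\\5&99\end{smallmatrix}\right)$ has determinant $0.21\cdot 99-25<0$ and so is not in $\overline{\Omega}$; the same $y$ with $a=\mathrm{diag}(1.1,10)=e+z'$, $z'\in\Omega$, gives $P(a)(y)-y=aya-y\notin\overline{\Omega}$. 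So the cone order (unlike the operator order for the self-adjoint $P(e+z)$, which is the inequality you correctly distinguish) is not expanded by these factors, your induction cannot be finished by importing the paper's ending, and conversely your observation exposes that the paper's concluding monotonicity claims are false as stated. The ``careful bookkeeping'' you point to --- a direct cone-positive decomposition of $R(z)-z$, or of $Q_{k}^{*}(y)-\pi^{-1}(x_{1})(y)$, exploiting the special form of the elements that actually occur --- is the real content of the proposition, and neither your proposal nor, in fact, the paper's argument supplies it.
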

\begin{proof}We have that
\begin{eqnarray*}
Q_{k}^{*}=P\left(u_{k+1}(e,x_{1},...,x_{k+1})\right)\pi^{-1}(x_{k+1})\pi^{-1}(x_{k})
\left(\displaystyle\prod_{i=1}^{k-1}P\left(e+\left[x_{k+1-i},...,x_{k}\right]\right)\pi^{-1}(x_{k-i})\right)
\end{eqnarray*}
Using the fact that $P(x)=\pi(x)\pi^{*}(x)$, we obtain that
$Q_{k}^{*}$ is equal to the composition of the following elements of
$G$:
\begin{equation}\label{714}
\pi^{*}\left(e+\left[x_{2},...,x_{k}\right]\right)\pi^{-1}(x_{1}),
\end{equation}
then the automorphisms
\begin{equation}\label{709}
\pi^{*}\left(e+\left[x_{k+1-i},...,x_{k}\right]\right)
\pi^{-1}(x_{k-i})\pi\left(e+\left[x_{k-i},...,x_{k}\right]\right),
\end{equation}
for $i=1,...,k-2$, then
\begin{equation}\label{702}
\pi^{-1}(x_{k}) \pi\left(e+\left[x_{k}\right]\right),
\end{equation}
and
\begin{equation}\label{679}
P\left(u_{k+1}(e,x_{1},...,x_{k+1})\right)\pi^{-1}(x_{k+1}).
\end{equation}
For the terms in (\ref{709}), we have
\begin{eqnarray*}
&&\pi^{*}\left(e+\left[x_{k+1-i},...,x_{k}\right]\right)
\pi^{-1}(x_{k-i})\pi\left(e+\left[x_{k-i},...,x_{k}\right]\right)\\&=&
\pi^{*}\left(e+\left[x_{k+1-i},...,x_{k}\right]\right)
\pi^{-1}(x_{k-i})\pi\left(e+\pi(x_{k-i})\left(e+\left[x_{k+1-i},...,x_{k}\right]\right)^{-1}\right)\\&=&
\pi^{*}\left(e+\left[x_{k+1-i},...,x_{k}\right]\right)
\pi\left(\pi^{-1}(x_{k-i})(e)+\left(e+\left[x_{k+1-i},...,x_{k}\right]\right)^{-1}\right)\\&=&
\pi\left(e+\pi^{*}\left(e+\left[x_{k+1-i},...,x_{k}\right]\right)\pi^{-1}(x_{k-i})(e)\right),
\end{eqnarray*}
where in the last equality, we have used the fact that
$\pi^{*}(x)(x^{-1})=e$. \\
Concerning the term in (\ref{702}), we have
\begin{eqnarray*}
\pi^{-1}(x_{k})\pi\left(e+\left[x_{k}\right]\right)&=&\pi\left(\pi^{-1}(x_{k})\left(e+\left[x_{k}\right]\right)\right)\\
&=&\pi\left(e+\pi^{-1}(x_{k})\left(e\right)\right).
\end{eqnarray*}
Finally, for (\ref{679}), according to the definition of $u_{k}$ see
(\ref{459}), and to Lemma \ref{613}, we have that
$u_{k+1}(e,x_{1},...,x_{k+1})$ can be written in the form
$$u_{k+1}(e,x_{1},...,x_{k+1})=e+\pi^{*}(x_{k+1})(e+v),$$ where $v$
is an element of $\Omega$. Thus
\begin{eqnarray*}
P\left(u_{k+1}(e,x_{1},...,x_{k+1})\right)\pi^{-1}(x_{k+1})&=&P\left(e+\pi^{*}(x_{k+1})(e+v)\right)\pi^{-1}(x_{k+1})\\
&=&\pi\left(e+\pi^{*}(x_{k+1})(e+v)\right)\\&&\pi^{*}\left(e+\pi^{*}(x_{k+1})(e+v)\right)\pi^{-1}(x_{k+1})\\
&=&\pi\left(e+\pi^{*}(x_{k+1})(e+v)\right)\\&&\pi^{*}\left(\pi^{*-1}(x_{k+1})\left(e+\pi^{*}(x_{k+1})(e+v)\right)\right)\\
&=&\pi\left(e+\pi^{*}(x_{k+1})(e+v)\right)\\&&\pi^{*}\left(e+\pi^{*-1}(x_{k+1})(e)+v\right).
\end{eqnarray*}
Having done this, we see that to get $Q_{k}^{*}(y)$, we first apply
$\pi^{-1}(x_{1})$ to $y$, then we apply automorphisms which are of
the form $\pi(e+z)$ or $\pi^{*}(e+z)$ or $P(e+z)$, with
$z\in\Omega$. Using the fact that for all $z$ and $y$ in $\Omega$,
we have that
$$\pi(e+z)(y)-y\in\Omega, \ \pi^{*}(e+z)(y)-y\in\Omega \,  \  \textrm{and} \ \ P(e+z)(y)-y\in\Omega,$$
in other word,
$$\pi(e+z)(y)> y, \  \ \pi^{*}(e+z)(y)> y \, \ \textrm{and} \ \ P(e+z)(y)> y,$$
we conclude that, given $y$ in $\Omega$, we have that for all
$k\geq2$,
$$Q_{k}^{*}(y)>\pi^{-1}(x_{1})(y).$$
\end{proof}
\begin{corollary}For all  $y$ in $\Omega$, there exists a positive
random variable $C$ such that for all $k$,
$$\|Q_{k}^{*}(y)\|>C$$
\end{corollary}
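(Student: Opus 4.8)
The plan is to read off the bound directly from the preceding Proposition, which gives, for every $k\geq2$, the cone inequality $Q_{k}^{*}(y)>\pi^{-1}(x_{1})(y)$, that is $Q_{k}^{*}(y)-\pi^{-1}(x_{1})(y)\in\Omega$. Since $x_{1}\in\Omega$ and $\pi^{-1}(x_{1})\in G$ preserves the cone, the element $b:=\pi^{-1}(x_{1})(y)$ again lies in $\Omega$ and, crucially, does not depend on $k$. The whole point is that $b$ should supply the uniform lower bound; the only ingredient to be added is that the cone order $a>b$ transfers to the norms as $\|a\|>\|b\|$.

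First I would establish the monotonicity of the Euclidean norm with respect to the order of $\Omega$: if $a,b\in\Omega$ with $a-b\in\Omega$, then $\|a\|>\|b\|$. This follows from the bilinearity identity
$$\|a\|^{2}-\|b\|^{2}=\langle a-b,\,a+b\rangle$$
together with self-duality: since $a-b\in\Omega$ and $a+b\in\Omega\subset\overline{\Omega}\setminus\{0\}$, the right-hand side is strictly positive. Applying this with $a=Q_{k}^{*}(y)$ and $b=\pi^{-1}(x_{1})(y)$, the Proposition yields, for every $k\geq2$,
$$\|Q_{k}^{*}(y)\|>\|\pi^{-1}(x_{1})(y)\|.$$

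Next I would verify measurability and positivity of the candidate bound. The map $x_{1}\mapsto\pi^{-1}(x_{1})$ is measurable by the construction of the division algorithm, so $b=\pi^{-1}(x_{1})(y)$ is a random variable valued in $\Omega$, whence $\|b\|$ is a strictly positive random variable. To cover the single index $k=1$, not treated by the Proposition, I observe that $Q_{1}\in G$ gives $Q_{1}^{*}(y)\in\Omega$, a measurable function of $(x_{1},x_{2})$, so $\|Q_{1}^{*}(y)\|>0$ as well. Setting
$$C=\tfrac{1}{2}\min\bigl\{\,\|\pi^{-1}(x_{1})(y)\|,\ \|Q_{1}^{*}(y)\|\,\bigr\},$$
we obtain a strictly positive random variable, independent of $k$, for which $\|Q_{k}^{*}(y)\|>C$ holds for all $k$.

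The only genuinely non-routine step is the norm-monotonicity lemma, and even that collapses to self-duality once the identity $\|a\|^{2}-\|b\|^{2}=\langle a-b,a+b\rangle$ is written down; everything else is bookkeeping. The conceptual point worth stressing is that $C$ depends only on $x_{1}$ (and, for the base case, on $x_{2}$) and not on $k$, which is precisely what is required in order to pass to the limit in the ensuing convergence argument.
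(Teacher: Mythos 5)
Your proposal is correct and takes essentially the same route as the paper: both reduce the norm comparison to the positivity of $\langle Q_{k}^{*}(y)-\pi^{-1}(x_{1})(y),\,Q_{k}^{*}(y)+\pi^{-1}(x_{1})(y)\rangle$ (the paper phrases it as $\textrm{tr}\left((a-b).(a+b)\right)=\textrm{tr}(a^{2})-\textrm{tr}(b^{2})$) and then take $C=\|\pi^{-1}(x_{1})(y)\|$. If anything, your formulation via self-duality is slightly cleaner, since the paper's intermediate assertion that $(a-b)(a+b)\in\Omega$ is not valid in general for Jordan products of cone elements (only the trace inequality, which is exactly your inner-product identity, is actually needed), while your separate handling of $k=1$ and the factor $\tfrac{1}{2}$ are unnecessary but harmless.
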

\begin{proof}
As $Q_{k}^{*}(y)>\pi^{-1}(x_{1})(y)$, then
$(Q_{k}^{*}(y))^{2}>(\pi^{-1}(x_{1})(y))^{2}$. In fact
$$(Q_{k}^{*}(y))^{2}-(\pi^{-1}(x_{1})(y))^{2}=(Q_{k}^{*}(y)-\pi^{-1}(x_{1})(y))
(Q_{k}^{*}(y)+\pi^{-1}(x_{1})(y))\in\Omega.$$ Therefore
$\textrm{tr}((Q_{k}^{*}(y))^{2})>\textrm{tr}((\pi^{-1}(x_{1})(y))^{2})$,
which means that $\|Q_{k}^{*}(y)\|>\|\pi^{-1}(x_{1})(y)\|$. Thus it
suffices to take $C=\|\pi^{-1}(x_{1})(y)\|$.
\end{proof}\\
Besides the general facts that we have established above, for the
proof of the main result of the paper concerning the convergence of
a random non ordinary continued fraction, we need the following
probability result which also appears in $[1]$. We give it with a
slightly different proof.
\begin{proposition}\label{768} Let $(\mathfrak{A}_{k})_{k\geq1}$
be an increasing sequence of $\sigma-$fields, and let $A_{k}$ be in
$\mathfrak{A}_{k}$, for $k\geq1$. Suppose that there exit $K>1$,
$0<k_{0}<K$, and a random variable $\alpha$ which is
$\mathfrak{A}_{k_{0}}-$measurable and valued in $(0,1)$ such that
for all $k\geq K $,
$\mathbb{E}\left(1_{A_{k+1}}|\mathfrak{A}_{k}\right)\leq \alpha$. Then\\
$$P\left(\underset{k\geq K }{\bigcap}A_{k}\right)=0$$
\end{proposition}
\begin{proof}Let us show by induction on $p\geq1$ that
$$\mathbb{E}\left(1_{A_{K}}...1_{A_{K+p}}|\mathfrak{A}_{k_{0}}\right)\leq \alpha^{p }\ \ a.s.$$
For $p=1$,
\begin{eqnarray*}
\mathbb{E}\left(1_{A_{K}}1_{A_{K+1}}|\mathfrak{A}_{k_{0}}\right)&=&
\mathbb{E}\left(\left(1_{A_{K}}1_{A_{K+1}}|\mathfrak{A}_{K}\right)|\mathfrak{A}_{k_{0}}\right)\\
&=&\mathbb{E}\left(1_{A_{K}}\left(1_{A_{K+1}}|\mathfrak{A}_{K}\right)|\mathfrak{A}_{k_{0}}\right)
\\
&\leq& \alpha\mathbb{E}\left(1_{A_{K}}|\mathfrak{A}_{k_{0}}\right)\
\ a.s.\\
&\leq& \alpha \ \ a.s..
\end{eqnarray*}
Now suppose that
$\mathbb{E}\left(1_{A_{K}}...1_{A_{K+p}}|\mathfrak{A}_{k_{0}}\right)\leq
\alpha^{p }\ \ a.s.$. Then
\begin{eqnarray*}
\mathbb{E}\left(1_{A_{K}}...1_{A_{K+p+1}}|\mathfrak{A}_{k_{0}}\right)&=&
\mathbb{E}\left(\left(1_{A_{K}}...1_{A_{K+p+1}}|\mathfrak{A}_{K+p}\right)|\mathfrak{A}_{k_{0}}\right)\\
&=&\mathbb{E}\left(1_{A_{K}...1_{A_{K+p}}}\left(1_{A_{K+p+1}}|\mathfrak{A}_{K+p}\right)|\mathfrak{A}_{k_{0}}\right)
\\
&\leq&
\alpha\mathbb{E}\left(1_{A_{K}}...1_{A_{K+p}}|\mathfrak{A}_{k_{0}}\right)\
\ a.s.\\
&\leq& \alpha \alpha^{p }=\alpha^{p+1 }\ \ a.s..
\end{eqnarray*}
Having shown that
$$\mathbb{E}\left(1_{A_{K}}...1_{A_{K+p}}|\mathfrak{A}_{k_{0}}\right)\leq \alpha^{p }\ \ a.s.,$$
we just need to take the expectation and let $p\rightarrow\infty$ to
get the result.
\end{proof}\\
We are now in position to state and prove the main result of the
paper.
\begin{theorem} \label{830}Let $(x_{k})_{k\geq1}$ be a sequence of independent random
variables in the cone $\Omega$ with distributions absolutely
continuous with respect to the Lebesgue measure and such that for
all $k$, the complementary in $\Omega \ $ of the support of
$x_{k}^{-1}$ is bounded. Suppose that there exists $p\geq1$, such
that for all $i\geq1$,
$$ \mathfrak{L}(x_{i})=\mathfrak{L}(x_{i+p}).$$
Then the continued fraction $\left[x_{1},...,x_{k}\right]$ is almost
surely convergent.
\end{theorem}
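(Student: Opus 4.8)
The plan is to reduce the almost sure convergence of $\left[x_1,\dots,x_k\right]$ to the almost sure divergence of the smallest eigenvalue of $w_k^{-1}$, and then to obtain that divergence by a Borel--Cantelli type argument through Proposition \ref{768}. By the theorem asserting that $(w_k)$ is decreasing and the identity $\left[x_1,\dots,x_n\right]=x_1+\sum_{k=1}^{n-1}(-1)^k w_k$, the alternating-series proposition shows that the continued fraction converges as soon as $w_k\to 0$. Since each $w_k$ lies in $\Omega$, all its eigenvalues are positive, so $w_k\to 0$ is equivalent to $\lambda_{\min}(w_k^{-1})=1/\lambda_{\max}(w_k)\to+\infty$. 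Using the telescoping form of (\ref{688}), namely $w_k^{-1}=w_1^{-1}+\sum_{j=1}^{k-1}Q_j(x_{j+2}^{-1})$, together with the superadditivity $\lambda_{\min}(a+b)\ge\lambda_{\min}(a)+\lambda_{\min}(b)$ on the cone, it suffices to prove that $\sum_{j}\lambda_{\min}\bigl(Q_j(x_{j+2}^{-1})\bigr)=+\infty$ almost surely.

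The decisive point is to bound $\lambda_{\min}\bigl(Q_j(x_{j+2}^{-1})\bigr)$ from below by a quantity that does not deteriorate as $j\to\infty$. A naive control of the event $\{Q_j(x_{j+2}^{-1})\ge\delta e\}$ fails, because $Q_j^{-1}(e)$ may grow with $j$ as the automorphisms accumulate. Instead I would transfer the estimate $Q_k^*(y)\ge\pi^{-1}(x_1)(y)$ of the preceding Proposition, valid for $y\in\Omega$, onto $Q_k$ itself. Setting $T=Q_k^*-\pi^{-1}(x_1)$, that estimate reads $T(\overline\Omega)\subseteq\overline\Omega$; by self-duality of $\Omega$ the adjoint $T^*=Q_k-\pi^{*-1}(x_1)$ also maps $\overline\Omega$ into $\overline\Omega$, so $Q_k(z)\ge\pi^{*-1}(x_1)(z)$ for all $z\in\overline\Omega$ and all $k$. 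Hence, on the event $B_k=\{x_{k+2}^{-1}\ge\delta\,\pi^*(x_1)(e)\}$ one gets $Q_k(x_{k+2}^{-1})\ge\pi^{*-1}(x_1)(x_{k+2}^{-1})\ge\delta e$, so that $\lambda_{\min}\bigl(Q_k(x_{k+2}^{-1})\bigr)\ge\delta$, with a threshold governed only by the fixed (given $x_1$) element $\delta\,\pi^*(x_1)(e)$ rather than by the uncontrolled $Q_k$.

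It then remains to show that $B_k$ occurs infinitely often almost surely, for which I would apply Proposition \ref{768} with $\mathfrak{A}_k=\sigma(x_1,\dots,x_{k+2})$ and $A_k=B_k^{c}$. Since $\pi^*(x_1)(e)$ is $\mathfrak{A}_k$-measurable while $x_{k+3}$ is independent of $\mathfrak{A}_k$, one computes $\mathbb{E}(1_{A_{k+1}}\mid\mathfrak{A}_k)=1-P\bigl(x_{k+3}^{-1}\ge\delta\,\pi^*(x_1)(e)\mid x_1\bigr)$. The absolute continuity of the laws and the hypothesis that the complement in $\Omega$ of the support of $x_{k+3}^{-1}$ is bounded force the shifted cone $\delta\,\pi^*(x_1)(e)+\overline\Omega$ to carry positive mass, and the periodicity $\mathfrak{L}(x_i)=\mathfrak{L}(x_{i+p})$ makes this mass bounded below by a single $\sigma(x_1)$-measurable $\beta\in(0,1)$ valid for every $k$. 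Taking $\alpha=1-\beta$ (replaced by $\max(\alpha,1/2)$ if needed to remain in $(0,1)$), Proposition \ref{768} gives $P\bigl(\bigcap_{k\ge K}A_k\bigr)=0$ for every $K$, hence $P(\limsup_k B_k)=1$.

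On the resulting full-probability event, infinitely many increments satisfy $\lambda_{\min}\bigl(Q_j(x_{j+2}^{-1})\bigr)\ge\delta$, so the series of the first paragraph diverges, $\lambda_{\min}(w_k^{-1})\to+\infty$, and therefore $w_k\to 0$; the continued fraction converges almost surely. I expect the main obstacle to be exactly the uniform-in-$k$ lower bound of the second paragraph: the difficulty is that the operators $Q_k$ accumulate and need not be controllable individually, and the resolution is to read the Proposition's inequality on $Q_k^*$ as a cone-positivity statement and dualize it, so that the fixed automorphism $\pi^{*-1}(x_1)$ replaces $Q_k$; the remaining technical work is checking the hypotheses of Proposition \ref{768}, i.e. the $\mathfrak{A}_{k_0}$-measurability of $\alpha$ and the strict bounds $0<\beta<1$, which follow from the support and periodicity assumptions.
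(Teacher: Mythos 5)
Your argument is correct, and although it rests on the same pillars as the paper's proof --- the theorem that $(w_k)$ is decreasing, the identity $w_{k+1}^{-1}-w_k^{-1}=Q_k(x_{k+2}^{-1})$ of (\ref{688}), the lower bound $Q_k^{*}(y)\geq \pi^{-1}(x_1)(y)$, and Proposition \ref{768} fed by the periodicity and support hypotheses --- you implement the decisive estimate in a genuinely different way. The paper fixes $y\in\Omega$, looks at the scalar increments $\langle w_{k+1}^{-1}-w_k^{-1},y\rangle=\langle x_{k+2}^{-1},Q_k^{*}(y)\rangle$, and excludes their convergence to $0$ via the small-ball function $h_{x_{k+2}^{-1}}\bigl(\varepsilon/\|Q_k^{*}(y)\|\bigr)\leq h_{x_{k+2}^{-1}}(\varepsilon/C)$, the uniformity in $k$ coming from the Corollary $\|Q_k^{*}(y)\|>C=\|\pi^{-1}(x_1)(y)\|$. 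You instead read the Proposition as positivity of the operator $Q_k^{*}-\pi^{-1}(x_1)$ on $\overline{\Omega}$ and use self-duality of the cone to pass to the adjoint, getting $Q_k(z)\geq\pi^{*-1}(x_1)(z)$ on $\overline{\Omega}$; this dualization is sound, and it lets you work with the cone-order events $B_k=\{x_{k+2}^{-1}\geq\delta\,\pi^{*}(x_1)(e)\}$, whose conditional probabilities are bounded below by a $\sigma(x_1)$-measurable $\beta>0$ exactly as you say (the shifted cone $\delta\,\pi^{*}(x_1)(e)+\Omega$ is unbounded, hence meets the support of $x_{k}^{-1}$, and Proposition \ref{768} tolerates a random $\alpha$, so no uniformity in $x_1$ is needed). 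What your route buys is real: you obtain $\lambda_{\min}(w_k^{-1})\to\infty$, i.e.\ $\lambda_{\max}(w_k)\to 0$ and genuine norm convergence $w_k\to 0$, whereas pairing against a single fixed $y\in\Omega$, as the paper does, only controls $\mathrm{tr}(w_k^{-1})$, i.e.\ $\lambda_{\max}(w_k^{-1})$, so the paper's asserted equivalence between ``$w_k\to 0$'' and ``$\langle w_k^{-1},y\rangle$ diverges'' is its one soft spot --- a gap your dualization silently repairs. What the paper's route buys is economy: it needs neither the duality step nor the superadditivity $\lambda_{\min}(a+b)\geq\lambda_{\min}(a)+\lambda_{\min}(b)$, which in a general Euclidean Jordan algebra requires the variational characterization of $\lambda_{\min}$ via primitive idempotents for the trace form; note, though, that you can drop superadditivity entirely, since all increments lie in $\overline{\Omega}$ and keeping only the good indices already yields $w_k^{-1}\geq w_1^{-1}+N_k\,\delta e$ with $N_k\to\infty$. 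Two small points to record when writing this up: the inequality $Q_k^{*}(y)\geq\pi^{-1}(x_1)(y)$ is stated for $k\geq 2$ (harmless, as you only need infinitely many $k$), and the measurability of $u\mapsto P\bigl(x_i^{-1}\geq\delta\,\pi^{*}(u)(e)\bigr)$, needed for $\alpha$ to be $\mathfrak{A}_{k_0}$-measurable, follows from the measurability of the division algorithm.
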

\begin{proof}With the results what we have established above, we
need only to show that under these assumptions, the sequence
$(w_{k})$ defined in (\ref{461}) converges almost surely to $0$, or
equivalently that for all $y$ in $\Omega$, the sequence $(\langle
w_{k}^{-1}, y \rangle)_{k\geq1}$ diverges almost surely. in order to
do so, we will show that
$$P\left(\left\{\langle w_{k+1}^{-1}-w_{k}^{-1},y\rangle \underset{k\rightarrow \infty }{\rightarrow }0\right\}\right)=0$$
Let  $\mathfrak{A}_{k}$ be the $\sigma-$field generated by the
random variables $x_{1},...,x_{k+1}$, and define for
$\varepsilon>0$,
$$A_{k}=\left\{\langle w_{k}^{-1}-w_{k-1}^{-1},y\rangle \leq\varepsilon\right\}.$$
Then $A_{k}\in \mathfrak{A}_{k}$, and we have
$$\left\{\langle w_{k+1}^{-1}-w_{k}^{-1},y\rangle \underset{k\rightarrow \infty }{\rightarrow }0\right\}\subset
\underset{n>0 }{\bigcup } \ \underset{k\geq n }{\bigcap }A_{k}.$$
Given that $w_{k+1}^{-1}-w_{k}^{-1}= Q_{k}(x_{k+2}^{-1})$, with
$Q_{k}$ defined in (\ref{624}) we have
\begin{eqnarray*}
A_{k+1}&=&\left\{\langle Q_{k}(x_{k+2}^{-1}),y\rangle \leq\varepsilon\right\}\\
&=&\left\{\langle
x_{k+2}^{-1},Q_{k}^{*}(y)\rangle\leq\varepsilon\right\}\\
&=&\left\{\langle
x_{k+2}^{-1},\frac{Q_{k}^{*}(y)}{\|Q_{k}^{*}(y)\|}\rangle\leq\frac{\varepsilon}{\|Q_{k}^{*}(y)\|}\right\}.
\end{eqnarray*}
Let $S$ be the unit sphere in $V$, and define for a random variable
$z$,
$$h_{z}(a)= \underset{\theta\in S}{\sup }P\left(|\langle
z,\theta\rangle|\leq a\right),$$ then we have that
\begin{eqnarray*}
\mathbb{E}\left(1_{A_{k+1}}|\mathfrak{A}_{k}\right)&=&P\left(\left\{\langle
x_{k+2}^{-1},\frac{Q_{k}^{*}(y)}{\|Q_{k}^{*}(y)\|}
\rangle\leq\frac{\varepsilon}{\|Q_{k}^{*}(y)\|}\right\}|\mathfrak{A}_{k}\right)\\&\leq&
h_{x_{k+2}^{-1}}\left(\frac{\varepsilon}{\|Q_{k}^{*}(y)\|}\right), \
\textrm{because} \ Q_{k}^{*}(y) \ is \
\mathfrak{A}_{2}-\textrm{measurable}\\&\leq&
h_{x_{k+2}^{-1}}\left(\frac{\varepsilon}{C}\right).
\end{eqnarray*}
 Denoting
$\alpha=\max
\left(h_{x_{1}^{-1}}\left(\frac{\varepsilon}{C}\right),...,h_{x_{p}^{-1}}
\left(\frac{\varepsilon}{C}\right)\right),$ then according to the
assumptions on the distributions of the $x_{i}$ and on the support
of $x_{i}^{-1}$, we have that $\alpha<1$, and that
$$\mathbb{E}\left(1_{A_{k+1}}|\mathfrak{A}_{k}\right)\leq \alpha.$$
Therefore using Proposition \ref{768} with $k_{0}=2$, we deduce that
$$P\left(\underset{k\geq n }{\bigcap }A_{k}\right),$$ which is the
desired result.
\end{proof}\\
\textbf{Application} Consider the beta distribution of the second
kind $\beta^{(2)}_{p, q} $ on $ \Omega$ given by
$$\beta^{(2)}_{p, q}(dx)=(B_{\Omega}(p, q))^{-1}
(\Delta(x))^{p-\frac{r+1}{2}} (\Delta(e+x))^{-(p+q)}
{\mathbf{1}}_{\Omega }(x) dx,
$$
where  $ B_{\Omega}(p, q)$ is the multivariate beta function defined
by $$ \ B_{\Omega}(p, q)=\frac{\Gamma_{\Omega}(p) \Gamma_{\Omega}(q)
}{\Gamma_{\Omega}(p+q)}, \ \ \textrm{with} \ \
\Gamma_{\Omega}(p)=(2\pi)^{\frac{r(r-1)}{4}}\prod_{k=1}^{r}\Gamma(p-(k-1)/2).$$
Let $(W_n)_{n\geq 1}$ and $(W'_n)_{n\geq 1}$ be two independent
sequences of independent random variables with respective
distributions $\beta_{b,a}^{(2)}$ and $\beta_{b,a'}^{(2)}$, and
consider the random continued fraction
$\left[x_{1},...,x_{k}\right]$ as defined in (\ref{384}) such that
$$x_{2n+1}=W_{n+1}\ \ \textrm{for}\ \ n\geq0 \ \ \textrm{and} \ \ x_{2n}=W'_n  \ \ \textrm{for}\ \ n\geq1.$$
Then we have that \\
$\bullet$  $(x_{n})_{n\geq1}$ is a sequence of independent random
variables in the cone $\Omega$ with distributions absolutely
continuous with respect to the Lebesgue measure. \\
$\bullet$ For all $k$, the support of $x_{k}^{-1}$ is $\Omega $,
its complementary in  $\Omega$ is bounded. \\
$\bullet$ For all $i\geq1$, $
\mathfrak{L}(x_{i})=\mathfrak{L}(x_{i+2}).$ \\According to Theorem
\ref{830}, the continued fraction $\left[x_{1},...,x_{k}\right]$ is
almost surely convergent. As the convergence of a continued fraction
doesn't depend on the initial vector, we deduce that the continued
fraction $\left[e,x_{1},...,x_{k}\right]$ is also almost surely
convergent. This result is used in a forthcoming paper concerning
the matrix beta-hypergeometric distribution.
\noindent \textbf{References}\\

$[1]$ E. Bernadac, Random continued fractions and inverse Gaussian
distribution

on a symmetric cone, J. Theoret. Probab. 8 (1995) 221-256.

$[2]$ J. Faraut, A. Kor\'{a}nyi, Analysis on symmetric cones,

Oxford Univ, Press, (1994).

$[3]$ A. Hassairi, S. Lajmi, Riesz exponential families on symmetric
cones,

J. Theoret. Probab. 14 (2001) 927-948.

$[4]$ A. Kacha, M. Raissouli, Convergence of matrix continued
fractions,

Linear Algebra Appl. 320 (2000) 115-129.

$[5]$ H.-xi Zhaoa, Matrix-valued continued fractions,

J. Approx. Theory 120 (2003) 136-152.
\end{document}